\def\tl{\widetilde}
\def\rr{{\mathbb R}}
\def\rn{{{\rr}^n}}
\def\rrm{{{\rr}^m}}
\def\rnm{{\rn\times\rrm}}
\def\zz{{\mathbb Z}}
\def\nn{{\mathbb N}}
\def\ch{{\mathcal H}}
\def\cc{{\mathbb C}}
\def\cd{{\mathcal D}}
\def\cs{{\mathcal S}}
\def\hr{{\mathcal R}}
\def\cq{{\mathcal Q}}
\def\cm{{\mathcal M}}
\def\ca{{\mathcal A}}
\def\ccc{{\mathcal C}}
\def\fz{\infty}
\def\az{\alpha}
\def\supp{{\mathop\mathrm{\,supp\,}}}
\def\esssup{{\mathop\mathrm{\,ess\ sup\,}}}
\def\lz{\lambda}
\def\ez{\epsilon}
\def\bz{\beta}
\def\gz{{\gamma}}
\def\Oz{{\Omega}}
\def\wz{\widetilde}
\def\boz{{\Omega}}
\def\vz{\varphi}
\def\tz{\theta}
\def\sz{\sigma}
\def\wz{\widetilde}
\def\hs{\hspace{0.2cm}}
\def\ls{\lesssim}
\def\lp{{L^p_w(\rnm)}}
\def\wfz{{\ca_\fz(\rnm;\,\vec A)}}
\def\wwp{{\ca_p(\rnm;\,\vec A)}}
\def\wwq{{\ca_q(\rnm;\,\vec A)}}
\def\hp{{H^p_w(\rnm;\,\vec{A})}}
\def\hat{\widehat}
\def\gfz{\genfrac{}{}{0pt}{}}
\def\dfrac{\displaystyle\frac}
\def\r{\right}
\def\lf{\left}
\newtheorem{thm}{Theorem}[section]
\newtheorem{lem}{Lemma}[section]
\newtheorem{prop}{Proposition}[section]
\newtheorem{rem}{Remark}[section]
\newtheorem{defn}{Definition}[section]
\numberwithin{equation}{section}
\begin{document}

\arraycolsep=1pt

\title{{\vspace{-5cm}\small\hfill\bf Sci. China Math., to appear}\\
\vspace{4.5cm}\Large\bf
Anisotropic Singular Integrals in Product Spaces
\footnotetext{\hspace{-0.35cm} {\it
2000 Mathematics Subject Classification}. { Primary 42B30; Secondary
42B20, 42B25, 42B35.}
\endgraf{\it Key words and phrases.} expansive dilation,
Muckenhoupt weight, product space, Hardy space, bump function,
singular integral.\endgraf
This work was partially supported by
Start-up Funding Doctor of Xinjiang University (Grant No. BS090109),
National Science Foundation of US (Grant No. DMS 0653881)
and National Natural Science Foundation of China (Grant No. 10871025).
\endgraf $^\ast$ Corresponding author.
}}
\author{Baode Li, Marcin Bownik,
Dachun Yang$^\ast$ and Yuan Zhou}
\date{ }
\maketitle

\begin{center}
\begin{minipage}{13.5cm}\small
{\noindent{\bf Abstract.} In this paper, the authors introduce a class
of product anisotropic singular integral operators,
whose kernels are adapted to the action of a pair
$\vec A\equiv(A_1,\,A_2)$ of expansive dilations on
$\mathbb R^n$ and $\mathbb R^m$, respectively. This class
is a generalization of product singular integrals with
convolution kernels introduced in the isotropic
setting by Fefferman and Stein [Adv. in Math. 45 (1982), 117--143].
The authors establish the boundedness of these operators
in weighted Lebesgue and Hardy spaces with weights in
product $A_\infty$ Muckenhoupt weights on $\mathbb R^n \times \mathbb R^m$.
These results are new even in unweighted setting for product
anisotropic Hardy spaces.}
\end{minipage}
\end{center}

\section{Introduction\label{s1}}

\hskip\parindent The theory of Hardy spaces and singular integrals
plays an important role in harmonic analysis and partial
differential equations; see, for example, \cite{fs72,g1,g2,s93}. There were several
directions of extending Hardy and other function space
theory from Euclidean spaces to other domains and non-isotropic
settings; see, for example, \cite{b1,ct75,ct77,cw77,fs82,st87,
st89,ht05,tr97,v06,cs06}. A significant effort was devoted in
developing a theory of Hardy spaces and
singular integrals on product domains. This direction was initiated
by Gundy and Stein \cite{gs79} with R. Fefferman, Nagel and Stein
among its main contributors  \cite{f87, fs82-1, fs82, ns04}. In particular,
Fefferman and Stein \cite{fs82-1}
introduced a class of product singular integrals with convolution
kernels and established their boundedness in Lebesgue spaces.
Fefferman further proved the boundedness of certain singular
integrals from product Hardy spaces to Lebesgue spaces in \cite{f87}
and also established some weighted boundedness in \cite{f88}.

The goal of this paper is to extend some of the existing
isotropic product Hardy space theory to the non-isotropic
setting associated with expansive dilations. Let $A_1$
and $A_2$ be expansive dilations, respectively, on
${\mathbb R}^n$ and ${\mathbb R}^m$. Let $w$ be a
product $A_\infty$ Muckenhoupt weight associated with a
pair of dilations, $\vec A\equiv(A_1,\,A_2)$. Recently,
the authors \cite{blyz1} developed the theory of
weighted anisotropic product Hardy spaces $H^p_w(\mathbb
R^n\times\mathbb R^m;\,\vec A)$ with $p\in(0,\,1]$. Motivated by Bownik \cite{b1}
and Nagel-Stein \cite{ns04}, in this paper,
we introduce a class of anisotropic singular integrals
on $\mathbb R^n\times\mathbb R^m$,
whose kernels are adapted to $\vec A$ in the sense of Bownik and
have vanishing moments defined via bump functions
in the sense of Stein. Then, we establish
the boundedness of these anisotropic singular integrals
on weighted Lebesgue spaces
$L^q_w(\mathbb R^n\times\mathbb R^m)$ with $q\in(1,\,\infty)$
and weighted Hardy spaces $H^p_w(\mathbb
R^n\times\mathbb R^m;\,\vec A)$ with $p\in(0,\,1]$.
These results are new even in the unweighted setting $w=1$.

We point out that the vanishing moments of singular integrals
defined via bump functions were originally introduced by Stein \cite{s93}.
To obtain the estimates for solutions of the Kohn-Laplacian
on certain classes of model domains in $\cc^N$, Nagel and
Stein \cite{ns04, ns06} introduced a class of singular integrals
including their product versions, whose vanishing moments are
defined via bump functions. Such a theory of product
singular integrals is also used in the analysis
on Heisenberg-type groups; see \cite{mrs95}.

To state our main results, we carefully define the class
of product anisotropic singular integral operators adapted
to the action of a pair $\vec A$ of expansive dilations.

\begin{defn}\rm\label{d1.1}
A real $n\times n$ matrix $A$ is an {\it expansive dilation},
shortly a {\it dilation}, if all its eigenvalues $\lz$
satisfy $|\lz|>1$.
{\it Throughout the whole paper, for the convenience,
we sometimes use $\rr^{n_1}$ and $\rr^{n_2}$ to denote, respectively,
$\rr^n$ and $\rr^m$.} For expansive dilation $A_i$ on $\rr^{n_i}$,
$i=1,\,2$, we always let $b_i\equiv |\det(A_i)|$ and $\vec A\equiv(A_1, A_2)$.
We also let $B^{(i)}_k$, $k\in\zz$,
be dilated balls and $\rho_i$ the step homogeneous-norm
associated with $A_i$ as in Definition \ref{d2.1}.
\end{defn}

\begin{defn}\label{d1.2}\rm
Let $N\in\nn$. A function $\psi$ on $\rn$ is called an {\it
$N$-normalized bump function associated to the ball $B_0$},
if $\supp\psi\subset
B_0$, and $\|\partial^\az\psi\|_{L^\infty(\rn)}\le1$ for all
$\az\in\zz_+^n$ with $|\az|\le N$.
A function $\psi$ on $\rn$ is called an {\it
$N$-normalized bump function associated to the ball $B_k$ with
$k\in\zz$} if and only if $\psi(A^k\cdot)$ is
an $N$-normalized bump function associated to the ball $B_0$.
\end{defn}

Let $\cd(\rnm)$ be the space
of all infinite differentiable functions with compact supports
endowed with the inductive limit topology and $\cd'(\rnm)$
its topological dual space. Also, let
$\boz_{n\times m}\equiv(\rnm)\setminus\{(x_1,x_2):\ x_1=0\ {\rm or}\
x_2=0\}$, $\nn\equiv\{1,\,2,\,\cdots\}$ and $\zz_+\equiv\nn\cup\{0\}$.

\begin{defn}\label{d1.3}\rm
Let $s_1,\, s_2\in\zz_+$. Let $T:\cd(\rnm) \to\cd'(\rnm)$ be
a continuous linear mapping. Then,
$T$ is called a {\it product anisotropic
singular integral operator} (PASIO) of order $(s_1,s_2)$,
if the following conditions are met:

(K0) $T$ has a distribution kernel $K$,
which is a continuous function on $\boz_{n\times m}$, such
that for all $\vz=\vz^{(1)}\otimes\vz^{(2)}\in\cd(\rnm)$
and $x_1 \not\in \supp \vz^{(1)}$, $x_2 \not\in \supp \vz^{(2)}$,
\[
T(\vz)(x_1,x_2)=\int_\rnm K(x_1-y_1,\,x_2-y_2)\vz^{(1)}(y_1)
\vz^{(2)}(y_2)\,dy_1\,dy_2;
\]

(K1) there exists a positive constant $C_1$ such that
for all $(x_1,\,x_2)\in\boz_{n\times m}$ with $\rho_i(x_i)=b_i^{\ell_i}$,
and for all $\az_i\in\zz_+^{n_i}$ with
$|\az_i|\le s_i$, $i=1,\,2$,
$$|\partial^{\az_1}_1\partial^{\az_2}_2[K(A_1^{\ell_1}\cdot,\,
A_2^{\ell_2}\cdot)](A_1^{-\ell_1}x_1,\, A_2^{-\ell_2}x_2)|\le C_1
[\rho_1(x_1)]^{-1}[\rho_2(x_2)]^{-1};$$

(K2) there exist $N_1,\,N_2\in\nn$ such that
for each $N_1$-normalized bump function $\psi^{(1)}$ associated
to $B^{(1)}_0$ and $N_2$-normalized bump function $\psi^{(2)}$
associated to $B_0^{(2)}$, and all $k_1,\,k_2\in\zz$,
$$\lf|\lf\langle
K, \, \psi^{(1)}(A_1^{k_1}\cdot)\otimes
\psi^{(2)}(A_2^{k_2}\cdot)\r\rangle\r|\le C_1;$$

(K3) for each $N_2$-normalized bump function $\psi^{(2)}$ associated
to $B_0^{(2)}$ and $k_2\in\zz$, there exists a continuous linear
operator $T^{\psi^{(2)},\,k_2}:\cd(\rn) \to \cd'(\rn)$ with a
distribution kernel $K^{\psi^{(2)},\,k_2}$, which is a continuous
function on $\rn\setminus\{0\}$, such
that for all $\vz^{(1)}\in\cd(\rn)$ and $x_1 \not\in\supp \vz^{(1)}$,
\[
T^{\psi^{(2)},\,k_2}(\vz^{(1)})=
T(\vz^{(1)}\otimes[\psi^{(2)}(A_2^{k_2}\cdot)])
=
\int_{\rn} K^{\psi^{(2)},\,k_2}(x_1-y_1) \vz^{(1)}(y_1) dy_1.
\]
Furthermore, for all $x_1 \neq 0$ with $\rho_1(x_1)=b_1^{\ell_1}$
and for all $\az_1\in\zz_+^n$ with $|\az_1|=s_1$,
\begin{eqnarray*}
&&|\partial_1^{\az_1}
[K^{\psi^{(2)},\,k_2}(A_1^{\ell_1}\cdot)](A_1^{-\ell_1}x_1)| \le
C_1 [\rho_1(x_1)]^{-1}.
\end{eqnarray*}
(K3) also holds with the roles of $x_1$ and $x_2$ interchanged.
\end{defn}

In the case when less regularity is desired, one can weaken
conditions (K1) and (K3) on the derivatives to more familiar
conditions on differences as in the work of Han and Yang
\cite{hy05} (see also \cite{hy09}).

\begin{defn}\label{d1.3A}\rm
In what follows, let $\sz_i$ for $i=1,\,2$ be as
in \eqref{e2.1} associated with $A_i$.
We say that $T$ is a {\it product anisotropic
singular integral operator} of order $0$,
if it satisfies Definition \ref{d1.3} with $s_1=s_2=0$.
Moreover, there exist $\ez_1,\ez_2>0$ such that for all
$(x_1,\,x_2)\in\boz_{n\times m}$ with  $\rho_i(x_i)=b_i^{\ell_i}$
and $h_i\in\rr^{n_i}$ with $\rho_i(h_i)\le b_i^{-2\sz_i}\rho_i(x_i)$,
we have
\begin{align*}
|\Delta_{h_1}^{(1)} K(x_1,x_2)| &\le C_1\frac{[\rho_1(h_1)]^{\ez_1}}
{[\rho_1(x_1)]^{1+\ez_1}}\frac 1{\rho_2(x_2)},
\\
|\Delta_{h_1}^{(1)}\Delta^{(2)}_{h_2} K(x_1,x_2)|&\le C_1
\frac{[\rho_1(h_1)]^{\ez_1}}{[\rho_1(x_1)]^{1+\ez_1}}
\frac{[\rho_2(h_2)]^{\ez_2}}{[\rho_2(x_2)]^{1+\ez_2}},
\\
|\Delta^{(1)}_{h_1} K^{\psi^{(2)},k_2}(x_1)|
&\le
C_1\frac{[\rho_1(h_1)]^{\ez_1}}{[\rho_1(x_1)]^{1+\ez_1}}.
\end{align*}
Here, we used difference operators
$\Delta_{h_1}^{(1)}K(x_1,\,x_2)\equiv K(x_1+h_1,x_2)-K(x_1,\,x_2)$
and $ \Delta_{h_2}^{(2)}K(x_1,\,x_2)\equiv
K(x_1,\,x_2+h_2)-K(x_1,\,x_2).$
The above estimates must also hold with the roles of $x_1$ and $x_2$ interchanged.
\end{defn}

Finally, we are ready to formulate the two main
results of this paper. Theorem \ref{t1.1} is a generalization
of a result of Fefferman and Stein \cite{fs82-1} from the
classical isotropic setting to the non-isotropic setting. Likewise,
Theorem \ref{t1.2} is a generalization of a
result of Han and Yang \cite{hy05} to the setting of weighted
anisotropic product Hardy spaces.

\begin{thm}\label{t1.1}
Let
 $w\in\ca_p(\rnm;\,\vec A)$ with $p\in(1,\,\fz)$. Then, a
 PASIO $T$ of order $0$ uniquely extends to a bounded operator
on $L^p_w(\rnm)$.
\end{thm}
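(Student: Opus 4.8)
The plan is to prove $L^p_w$-boundedness by the iterated vector-valued Calder\'on--Zygmund method, treating the two variables one at a time. Fix $w\in\ca_p(\rnm;\,\vec A)$. The first step is to record the standard fact (true for product weights, and for more general $\ca_p(\rnm;\,\vec A)$ weights by Fubini in the weighted sense) that for a.e.\ $x_2$ the function $x_1\mapsto w(x_1,x_2)$ lies in $\ca_p(\rn;A_1)$ with a uniform bound in an appropriate averaged sense, and symmetrically in $x_2$; the precise statement needed is that a one-parameter anisotropic Calder\'on--Zygmund operator on $\rn$ is bounded on $L^p_{w(\cdot,x_2)}(\rn)$ with a constant controlled by the relevant $\ca_p$ characteristic, together with a weighted Fubini argument to integrate in $x_2$.

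The second step is to realize $T$ as an iterated operator. For a fixed $N_2$-normalized bump $\psi^{(2)}$ associated to $B^{(2)}_0$ and $k_2\in\zz$, the partial operator $T^{\psi^{(2)},k_2}$ of Definition \ref{d1.3}(K3), together with the estimates in Definition \ref{d1.3A} on $\Delta^{(1)}_{h_1}K^{\psi^{(2)},k_2}$, is a one-parameter anisotropic Calder\'on--Zygmund operator on $\rn$ adapted to $A_1$, of order $0$, with kernel constant uniformly bounded in $\psi^{(2)}$ and $k_2$ by $C_1$. The known one-parameter theory (anisotropic CZ operators of order $0$ are bounded on $L^p_v(\rn)$ for $v\in\ca_p(\rn;A_1)$, which follows from the unweighted $L^2$ theory via the bump condition (K2) plus the difference estimates, and then extrapolation) then gives boundedness in the first variable. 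Symmetrically one gets boundedness in the second variable. To pass from these partial operators to $T$ itself, I would decompose a test tensor $\vz=\vz^{(1)}\otimes\vz^{(2)}$ by expanding $\vz^{(2)}$ along a suitable anisotropic atomic/molecular or Littlewood--Paley decomposition adapted to $A_2$ whose building blocks are (rescaled) normalized bump functions $\psi^{(2)}(A_2^{k_2}\cdot)$, apply the $\rn$-estimate for each $T^{\psi^{(2)},k_2}$ together with a weighted Fubini in $x_2$, and then sum using the second-variable regularity encoded in $\Delta^{(2)}_{h_2}K$ and $\Delta^{(1)}_{h_1}\Delta^{(2)}_{h_2}K$ in Definition \ref{d1.3A} to control the interaction across scales; square-function / Littlewood--Paley estimates in $L^p_w$ (available since $w\in\ca_p$) make this summation convergent.

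The third step is to make the reduction to test functions of tensor-product form legitimate and then to extend: tensor products are dense enough to determine $T$ on $\cd(\rnm)$ by continuity and (K0), and once the a priori bound $\|T\vz\|_{L^p_w}\le C\|\vz\|_{L^p_w}$ is established on this dense class with $C$ independent of $\vz$, a routine density and uniqueness argument (using that $\cd(\rnm)$ is dense in $L^p_w(\rnm)$ for $p\in(1,\fz)$ and $w\in\ca_p$) yields the unique bounded extension.

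I expect the main obstacle to be the second step, specifically the passage from the family of one-dimensional-in-spirit estimates for the partial operators $T^{\psi^{(2)},k_2}$ to a genuine two-parameter bound for $T$: one must choose the decomposition of the second variable into bump-function pieces so that (i) each piece is a legitimate input for (K3), (ii) the $\rn$-bounds assemble with the correct $\ca_p(\rn;A_1)$ constants after weighted Fubini, and (iii) the sum over $k_2$ (and over the atoms of the $A_2$-decomposition) converges, which is exactly where the mixed difference estimate on $\Delta^{(1)}_{h_1}\Delta^{(2)}_{h_2}K$ and the product $\ca_\infty$ (here $\ca_p$) structure of $w$ must be used, in the spirit of the iterated square-function arguments of Fefferman--Stein and Han--Yang. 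The anisotropy itself is handled throughout by systematically replacing Euclidean balls, dyadic cubes and moduli of continuity with the dilated balls $B^{(i)}_k$, the step homogeneous-norms $\rho_i$, and the constants $\sz_i$ from \eqref{e2.1}, as already set up in the definitions.
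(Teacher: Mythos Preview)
Your outline is a different route from the paper's, and Step~2 as written has a real gap.

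The paper does not iterate one-parameter Calder\'on--Zygmund theory. It works entirely on the Littlewood--Paley side: by Proposition~\ref{p2.2}, $\|Tf\|_{L^p_w}\sim\|\vec S_{\phi\ast\phi}(Tf)\|_{L^p_w}$ for $f\in\cd(\rnm)$, so it suffices to estimate $(K\ast\phi_{k_1,k_2})\ast(\phi_{k_1,k_2}\ast f)$ in $L^p_{w,\ch}$. The key device is the Nagel--Stein decomposition (Lemma~\ref{l3.1}), which writes each $\phi^{(i)}\in\cs_0(\rr^{n_i})$ as $\sum_{j_i\ge0}b_i^{-Mj_i}\phi^{(i,j_i)}$ with $\phi^{(i,j_i)}$ a compactly supported normalized bump on $B^{(i)}_{j_i}$. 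These bumps are exactly the inputs the PASIO axioms (K1)--(K3) are tailored to, and Lemma~\ref{l3.2A}/Remark~\ref{r3.2} turns them into the pointwise kernel bound
\[
|K\ast\phi^{\{j_1,j_2\}}_{k_1,k_2}(y)|\ls\prod_{i=1}^2\frac{b_i^{k_i\ez_i}}{[b_i^{k_i}+b_i^{-j_i}\rho_i(x_i)]^{1+\ez_i}},
\]
whence $|K^{j_1,j_2}_{k_1,k_2}\ast g|\ls b_1^{j_1(1+\ez_1)}b_2^{j_2(1+\ez_2)}\cm_s(g)$. The weighted vector-valued strong maximal inequality plus the $L^p_w$-boundedness of $\vec g_\phi$ then finish the argument; the factors $b_i^{-Mj_i}$ absorb the losses.

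The gap in your Step~2 is this: (K3) only tells you about $T(\vz^{(1)}\otimes\psi^{(2)}(A_2^{k_2}\cdot))$ as a distribution in $x_1$ alone, with no $x_2$-dependence attached. So even if you expand $\vz^{(2)}$ into rescaled bumps, applying $T^{\psi^{(2)},k_2}$ termwise does not produce a function of $(x_1,x_2)$ that you can place in $L^p_w(\rnm)$, let alone sum over $k_2$. To make an iterated argument rigorous you would need a genuine vector-valued CZ theorem (treat $T$ as an $A_1$-CZO whose kernel takes values in $A_2$-CZOs on $\rrm$), and the operator-valued kernel regularity this requires in the second variable is precisely what the mixed-difference hypotheses encode; carrying that out leads back to a two-parameter square-function estimate of the same shape as the paper's. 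What is missing from your sketch is an analogue of Lemma~\ref{l3.1} and the resulting pointwise estimate of Lemma~\ref{l3.2A}: that is the mechanism that makes ``sum over $k_2$'' converge, and without it the argument does not close.
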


\begin{thm}\label{t1.2} Let $w\in\wfz$ and $q_w$ be its
critical index as in \eqref{e2.4}. Let $s_1, s_2\in\zz_+$ and $p\in(0,\,1]$. If
\begin{equation}\label{e1.1}
s_i>(q_w/p-1)\log_{|\lz_{i,1}|}b_i \qquad\text{for }i=1,2,
\end{equation}
where $\lz_{i,1}$ is the smallest eigenvalue of $A_i$ in absolute value, then
a PASIO $T$ of order $(s_1+1,s_2+1)$ uniquely extends to a bounded
operator on $\hp$. Moreover, $T$ admits another unique
bounded extension to an operator $\hp \to L^p_w(\rnm)$.
\end{thm}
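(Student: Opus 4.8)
The plan is to follow the strategy of R.~Fefferman \cite{f87} for product singular integrals and of Han and Yang \cite{hy05}, transplanted to the weighted anisotropic setting via the atomic decomposition of $\hp$ and the anisotropic product Journ\'e covering lemma from \cite{blyz1}. First I would reduce to a uniform estimate over atoms. A PASIO of order $(s_1+1,s_2+1)$ is, in particular, a PASIO of order $0$ (its first-order derivative bounds in (K1) and (K3) give, via the mean value theorem, the difference estimates of Definition \ref{d1.3A}), so by Theorem \ref{t1.1} it is bounded on $\lq$ for every $q\in(1,\fz)$; this will be used for the integrability exponent $q$ of the atoms. By \eqref{e1.1} one may take $\vec s=(s_1,s_2)$ so that $\hpa=\hp$ and $\hpaf$ is dense in $\hp$ with equivalent quasi-norms (\cite{blyz1}). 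Since $p\le1$, the functionals $\|\cdot\|_{\lp}^p$ and $\|\cdot\|_{\hp}^p$ are subadditive, so it suffices to prove
\[
\|Ta\|_{\lp}\ls 1\qquad\text{and}\qquad\|Ta\|_{\hp}\ls 1
\]
with constants independent of the $(p,q,\vec s)$-atom $a$; then $\|Tf\|_{\lp}\ls\|f\|_{\hpaf}\approx\|f\|_{\hp}$ and $\|Tf\|_{\hp}\ls\|f\|_{\hp}$ for $f$ in the dense subspace $\hpaf$, and $T$ admits the two asserted bounded extensions, unique by density.

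For the $\hp\to\lp$ bound, fix an atom $a$ supported in an open set $\Omega\subset\rnm$ of finite measure and write $a=\sum_R a_R$ over dilated rectangles $R=R_1\times R_2\subset\Omega$, each $a_R$ having vanishing moments of order $s_1$ in the first variable and of order $s_2$ in the second, with $\big(\sum_R\|a_R\|_{\lq}^2\big)^{1/2}\le w(\Omega)^{1/q-1/p}$ and $\|a\|_{\lq}\le w(\Omega)^{1/q-1/p}$ (the normalizations from \cite{blyz1}). Since $w\in\wfz$ belongs to $\wwr$ for some $r\in(1,\fz)$, the strong maximal operator $\cm_s$ over dilated rectangles is bounded on $L^r_w(\rnm)$; put $\toz\equiv\{\cm_s(\chi_\Omega)>1/2\}$ and $\ttoz\equiv\{\cm_s(\chi_\toz)>1/2\}$, so $w(\ttoz)\ls w(\toz)\ls w(\Omega)$. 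Split
\[
\int_\rnm|Ta(x)|^p w(x)\,dx=\int_{\ttoz}|Ta|^p w+\int_{(\ttoz)^c}|Ta|^p w=:{\rm I}+{\rm II}.
\]
For ${\rm I}$, H\"older's inequality with exponent $q/p$, the $\lq$-boundedness of $T$, and the atom normalization give ${\rm I}\le\|Ta\|_{\lq}^p\,w(\ttoz)^{1-p/q}\ls\|a\|_{\lq}^p\,w(\Omega)^{1-p/q}\ls w(\Omega)^{p/q-1}w(\Omega)^{1-p/q}=1$. For ${\rm II}$ I would use $|Ta|^p\le\sum_R|Ta_R|^p$ and estimate each $Ta_R$ on $(\ttoz)^c$: when $x_1$ is $\rho_1$-far from $R_1$, a Taylor expansion of $K(x_1-\cdot,x_2-\cdot)$ in the first variable to order $s_1$, combined with the $x_1$-moments of $a_R$ and (K1) --- or, when $x_2$ is $\rho_2$-close to $R_2$, with a decomposition of $a_R(\cdot,y_2)$ against $N_2$-normalized bump functions in $y_2$ and the partial operators $K^{\psi^{(2)},k_2}$ of (K3), (K2) controlling the attendant $L^1$-normalization --- yields off-diagonal decay of $|Ta_R(x)|$ in $\rho_i(x_i-c_{R_i})$ faster than the critical rate $[\rho_i(x_i-c_{R_i})]^{-1}$ by a positive power $\delta_i$ (coming from the $s_i$ moments together with the extra derivative of the kernel), and likewise with the roles of $x_1$ and $x_2$ exchanged. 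Raising to the $p$-th power, integrating against $w$ over the $\rho$-annuli composing $(\ttoz)^c$, summing the resulting geometric series --- which converges precisely because \eqref{e1.1} makes $p\delta_i$ exceed the exponent governing the growth of $w$ along these annuli ($w$ lying in $\wwq$ for $q$ slightly larger than $q_w$) --- and finally summing over $R\subset\Omega$ by the weighted anisotropic product Journ\'e covering lemma of \cite{blyz1} together with $\big(\sum_R\|a_R\|_{\lq}^2\big)^{1/2}\le w(\Omega)^{1/q-1/p}$, we obtain ${\rm II}\ls 1$.

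For the $\hp\to\hp$ bound I would use the radial-maximal-function characterization of $\hp$ from \cite{blyz1}, namely $\|Ta\|_{\hp}\approx\|g^\ast(Ta)\|_{\lp}$ with $g^\ast(f)\equiv\sup_{k_1,k_2\in\zz}|\vz_{k_1,k_2}\ast f|$, where $\vz=\vz^{(1)}\otimes\vz^{(2)}\in\cd(\rnm)$ is a fixed product test function with enough vanishing moments and $\vz_{k_1,k_2}(x)\equiv b_1^{-k_1}b_2^{-k_2}\vz(A_1^{-k_1}x_1,A_2^{-k_2}x_2)$. One repeats the splitting $\int_\rnm[g^\ast(Ta)]^p w={\rm I}'+{\rm II}'$ over $\ttoz$ and its complement. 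Since $g^\ast(Ta)$ is dominated pointwise by the iterated one-parameter anisotropic Hardy--Littlewood maximal function $M^{(1)}M^{(2)}(Ta)$ (associated with $A_1$ and $A_2$, acting in $x_1$ and $x_2$), bounded on $\lq$ because $w\in\wwq$, the term ${\rm I}'$ is handled exactly as ${\rm I}$ above. For ${\rm II}'$ one writes $\vz_{k_1,k_2}\ast(Ta_R)=(\vz_{k_1,k_2}\ast K)\ast a_R$ and observes that the smoothness and moments of $\vz$ against the regularity and moments of $K$, together with the moments of $a_R$, reproduce the same improved off-diagonal decay as above, uniformly in $(k_1,k_2)$; the Journ\'e summation then gives ${\rm II}'\ls 1$. (Equivalently, one verifies that a suitable truncation of each $Ta_R$ is a fixed multiple of a $(p,q,\vec s)$-rectangle piece adapted to $R$, so that, modulo the tail already estimated, $Ta$ equals $w(\Omega)^{1/q-1/p}$ times an $\hp$-atom supported in $\ttoz$.)

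The main obstacle is the tail estimate ${\rm II}$ (and its maximal-function analogue ${\rm II}'$): for every dilated rectangle $R\subset\Omega$ and every point $x$ outside the double enlargement $\ttoz$ one must extract the full off-diagonal gain in \emph{both} coordinate directions, which forces a case analysis according as $x_i$ is $\rho_i$-near or $\rho_i$-far from $R_i$, $i=1,2$. In the mixed regimes the vanishing moments are available in only one variable, so one must fall back on the auxiliary operators $T^{\psi^{(2)},k_2}$, $T^{\psi^{(1)},k_1}$ of (K3) and the bump bound (K2) in the other. Assembling these pieces into a single clean decay estimate for $|Ta_R(x)|$ and then summing it over all $R\subset\Omega$ against the weight $w$ --- feasible only via the weighted anisotropic product Journ\'e covering lemma, and the step where \eqref{e1.1} is used in its sharp form --- is the technical heart of the proof.
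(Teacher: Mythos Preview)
Your overall architecture --- reduce to atoms, control the local part by the $L^q$-boundedness from Theorem \ref{t1.1}, and the tail by Taylor remainders exploiting the moments of $a_R$ together with (K1)--(K3), then sum via a Journ\'e-type lemma --- is indeed the Fefferman/Han--Yang philosophy, and the paper is in the same spirit. But the paper organizes things differently and, more importantly, uses different tools at two key steps.

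First, the paper does not work with general $(p,q,\vec s)$-atoms supported on open sets $\Omega$ and then invoke Journ\'e directly. Instead it applies a pre-packaged boundedness criterion (Lemma \ref{l3.4}, a vector-valued variant of \cite[Corollary 6.5]{blyz1}) which reduces matters to a single geometric-decay estimate for \emph{rectangular} atoms $a_R$: one must show that the $\ch$-valued Lusin pieces $\{\phi_{k_1,k_2}\ast\phi_{k_1,k_2}\ast T(a_R)\}_{k_1,k_2}$ have $L^p_w$-mass at most $C\max(b_1^{-\gz\eta_1},b_2^{-\gz\eta_2})$ outside the $\gz$-fold enlargement of $R$. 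Journ\'e's lemma is hidden inside the proof of that criterion in \cite{blyz1}, so it never appears in the present paper. This buys a much cleaner argument: only one rectangle at a time, no summation over $R\subset\Omega$.

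Second --- and this is where your proposal has a real gap --- for the $\hp\to\hp$ bound you invoke a ``radial-maximal-function characterization'' $\|f\|_{\hp}\approx\|\sup_{k_1,k_2}|\vz_{k_1,k_2}\ast f|\|_{\lp}$ with $\vz$ having \emph{vanishing moments}. No such characterization is available here; the paper explicitly notes that even the $\vec g$-function characterization of $\hp$ is not known in the anisotropic product setting. The space $\hp$ is \emph{defined} via the Lusin-area function $\vec S_\psi$ with $\psi$ as in Proposition \ref{p2.1}, and that is what the paper uses: $\|Ta\|_{\hp}=\|\vec S_{\phi\ast\phi}(Ta)\|_{\lp}$. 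Because $\phi^{(i)}\in\cs_0(\rr^{n_i})$ does not have compact support, one cannot directly localize $\phi_{k_1,k_2}\ast K\ast a_R$; the paper's workaround is the Nagel--Stein decomposition (Lemma \ref{l3.1}), writing each $\phi^{(i)}=\sum_{j_i\ge0}b_i^{-Mj_i}\phi^{(i,j_i)}$ with $\phi^{(i,j_i)}$ a normalized bump on $B^{(i)}_{j_i}$, which then feeds into the key kernel estimate Lemma \ref{l3.2A} on $\partial^{\az_1}_1\partial^{\az_2}_2[K\ast\vz_{k_1,k_2}]$ for compactly supported bumps $\vz$. Your sketch has no analogue of this step, and without it the convolution $\vz_{k_1,k_2}\ast K$ (with $\vz\in\cs_0$ non-compactly supported) is not under control.

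Finally, for $\hp\to\lp$ the paper takes a much shorter route than yours: once $\hp\to\hp$ is established, Lemma \ref{l3.7} (namely $f\in L^2\cap\hp\Rightarrow\|f\|_{\lp}\ls\|f\|_{\hp}$) combined with the $L^2$-boundedness of $T$ from Theorem \ref{t1.1} gives $\|Tf\|_{\lp}\ls\|Tf\|_{\hp}\ls\|f\|_{\hp}$ on the dense class $L^2\cap\hp$, with no separate tail estimate needed.
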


\begin{rem}\label{r1.2}\rm
Consider the classical case corresponding to the choice
of dyadic dilations $A_1=2I_{n_1}$, $A_2=2I_{n_2}$ and weight $w=1$.
Then, $q_w=1$, $\rho_i(x)=|x|^{n_i}$, and $\log_{|\lz_{i,1}|}b_i=n_i$ for $i=1,\,2$.
In this case, if $p\in(1,\,\fz)$ and
$\ez_i\in (0,\, 1/n_i]$, the boundedness on $L^p(\rnm)$
of product singular integrals as in Definition \ref{d1.3A}
follows from results of Nagel and Stein \cite{ns04}. On the other hand, if
$\max\{n_1/(n_1+\ez_1),\,n_2/(n_2+\ez_2)\}<p\le1$, then
the boundedness in $H^p(\rnm)$
of such product singular integrals was established by Han
and Yang \cite[Theorem 2]{hy05}.
\end{rem}

This paper is organized as follows. In Section \ref{s2}, we recall
some notation and known notions. The proofs of Theorems \ref{t1.1} and
\ref{t1.2} are presented in Sections \ref{s3} and \ref{s4}, respectively.
The methods used in these proofs borrow some ideas from
\cite{hy05} and \cite{ns04}; see also \cite{hy09} and \cite{hlll}.
However, unlike
\cite{hy05}, \cite{hy09} and \cite{hlll}, the discrete Calder\'on
reproducing formula with kernel
having compact support and the $g$-function characterization of the
product anisotropic Hardy spaces are not available. Instead, we use
the Lusin-area characterization with the kernels having no compact
support. To overcome these
additional difficulties, we invoke a decomposition technique of
kernels used by Nagel and Stein, see \cite[Lemma 3.5.1]{ns04} and
Lemma \ref{l3.1} below. Moreover, to prove Theorem \ref{t1.2}, we
use a variant of a key boundedness criterion established
in \cite[Corollary 6.1]{blyz1}, which reduces the boundedness of the
considered singular integrals to their behaviors on rectangular
atoms; see Lemma \ref{l3.4} below and also \cite[Corollary 1.1]{cyz}
for the corresponding result on $H^p(\rnm)$.

We finally make some conventions. Throughout this paper, we always
use $C$ to denote a positive constant that is independent of the main
parameters involved but whose value may differ from line to line.
Constants with subscripts do not change through the whole paper. We
use the symbol $f\ls g$ to denote $f\le Cg$,
and if $f\ls g\ls f$, we then write $f\sim g$. For all $x\in\rr$, we
denote $\lfloor x\rfloor$ by the {\it maximal integer no less than $x$}.

\section{Preliminaries}\label{s2}

\hskip\parindent In this section, we recall
basic facts about product Hardy spaces associated with expansive dilations.

By \cite[Lemma 2.2]{b1}, for a given expansive dilation $A$,
there exist an open ellipsoid $\Delta$ and $r\in (1,\,\fz)$
such that $\Delta\subset
r\Delta\subset A\Delta$. Moreover,
$|\Delta|=1$, where $|\Delta|$ denotes the $n$-dimensional Lebesgue
measure of the set $\Delta$.
Throughout the whole paper, we set
\begin{equation}\label{e2.1}
B_k\equiv A^k\Delta\ \mathrm{for}\ k\in \zz\
\mathrm{and\ let}\ \sz \ \mathrm{be\ the}\
minimum\ integer\ \mathrm{such\ that}\ 2B_0\subset A^\sz B_0.
\end{equation}
 Then $B_k$ is
open, $B_k\subset rB_k\subset B_{k+1}$ and $|B_k|=b^k$. Obviously,
$\sz\ge 1$. For any subset $E$ of $\rn$, let $E^\complement
\equiv\rn\setminus E$. Then it is easy to prove (see
\cite[p.\,8]{b1}) that for all $k,\,\ell\in\zz$,
\begin{eqnarray}
&&B_k+B_\ell\subset B_{\max\{k,\,\ell\}+\sz},\label{e2.2}\\
&&B_k+(B_{k+\sz})^\complement\subset (B_k)^\complement,\label{e2.3}
\end{eqnarray}
where $E+F$ denotes the algebraic sums $\{x+y:\, x\in E,\,y\in F\}$
of  sets $E,\, F\subset \rn$.

Recall that the homogeneous quasi-norm associated with $A$ was
introduced in \cite[Definition 2.3]{b1} as follows. For a fixed
dilation $A$, we always let $b\equiv|\det A|$.

\begin{defn}\label{d2.1}\rm
A \textit{homogeneous quasi-norm} associated with an expansive
dilation $A$ is a measurable mapping $\rho:\rn\to[0, \fz)$ such that

(i) $\rho(x)=0$ if and only if $x=0$;

(ii) $\rho(Ax)=b\rho(x)$ for all $x\in\rn$;

(iii) $\rho(x+y)\le H[\rho(x)+\rho(y)]$ for all $x,\, y\in\rn$,
where $H$ is a constant no less than $1$.

Define the \textit{step homogeneous quasi-norm} $\rho$ associated
with $A$ and $\Delta$ by setting, for all $x\in\rn$, $\rho(x)=b^k$
if $x\in B_{k+1}\setminus B_k$ or else $0$ if $x=0$.
\end{defn}

It was proved that all homogeneous quasi-norms associated with a
given dilation $A$ are equivalent (see \cite[Lemma 2.4]{b1}).
Therefore, for a given expansive dilation $A$, in what follows, for
convenience, we always use the step homogeneous quasi-norm $\rho$.
Moreover, from \eqref{e2.2} and \eqref{e2.3}, it follows that for
all $x,\,y\in\rn$,
$$\rho(x+y)\le b^\sz\max\lf\{\rho(x),\,\rho(y)\r\}\le
b^\sz[\rho(x)+\rho(y)].$$

The class of Muckenhoupt weights associated with $A$ was introduced
in \cite{bh}. For more details about weights, see
\cite{blyz,gr,g1,g2,st89}.

\begin{defn}\label{d2.3}\rm
Let $p\in[1,\,\fz)$, $A$ be a dilation and $w$ a nonnegative
measurable function on $\rn$. The function $w$ is said to belong to
the {\it weight class of Muckenhoupt} $\ca_p(\rn;\, A)$, if there
exists a positive constant $C$ such that when $p>1$,
$$\sup_{x\in\rn,\,k\in\zz}\lf\{\frac{1}{|B_k|}\int_{x+B_k}w(y)\,dy\r\}
\lf\{ \frac{1}{|B_k|}\int_{x+B_k}[w(y)]^{-1/(p-1)}\,dy\r\}^{p-1}\le
C,$$
$$\sup_{x\in\rn,\,k\in\zz}
\lf\{\frac{1}{|B_k|}\int_{x+B_k} w(y)\,dy\r\}\lf\{\mathop\esssup_{y\in x+B_k}
[w(y)]^{-1}\r\}\le C \qquad\text{when } p=1.$$
Moreover, the minimal constant $C$ as above is
denoted by $C_{A}(w)$.

Define $\ca_\fz(\rn;\,A)\equiv\cup_{1\le p<\fz} \ca_p(\rn;\,A)$.
\end{defn}

Product Muckenhoupt weights were first studied by R. Fefferman
\cite{f87}; see also \cite{s89}. Among several equivalent ways
of introducing product weights \cite[Theorem VI.6.2]{gr},
we adopt the following definition as in \cite{blyz1}.

\begin{defn}\label{d2.4}\rm
Let $\vec A=(A_1,A_2)$ be a pair of expansive dilations, respectively, on
${\mathbb R}^n$ and ${\mathbb R}^m$.
Let $p\in(1,\, \fz)$ and $w$ be a nonnegative
measurable function on $\rnm$. The function $w$
is said to be in the weight class of Muckenhoupt
$\ca_p(\rnm,\,\vec A)$, if $w(x_1,\, \cdot)\in \ca_p(\rrm;\,A_2)$
for almost every $x_1\in\rn$ and $\esssup_{x_1\in\rn}C_{A_2}
(w(x_1,\,\cdot))<\fz$, and $w(\cdot,\, x_2)\in
\ca_p(\rn;\,A_1)$ for almost every $x_2\in\rrm$ and
$\esssup_{x_2\in\rrm} C_{A_1}(w(\cdot,\, x_2))<\fz.$ In
what follows, let
$$C_{\vec A}(w)
\equiv\max\lf\{\mathop\esssup_{x_1\in\rn}C_{A_2}(w(x_1,\,\cdot)),
\ \mathop\esssup_{x_2\in\rrm} C_{A_1}(w(\cdot,\, x_2))\r\}.$$

Define $\wfz\equiv\cup_{1< p<\fz} \wwp$.
\end{defn}

For any $w\in\wfz$, define the critical index of $w$ by
\begin{eqnarray}\label{e2.4}
q_w\equiv\inf \{q\in(1,\,\fz):\, w\in\wwq\}.
\end{eqnarray}

Let $\cs(\rn)$ be the {\it space of Schwartz functions on $\rn$}.
For $\az\in\zz_+^n$ and $m\in\zz_+$, define seminorms
$\|\vz\|_{\az,\,m}\equiv\sup_{x\in\rn}[\rho(x)]^m
|\partial^\az\vz(x)|<\fz.$ It is
well-known that $\cs(\rn)$ forms a locally convex complete metric
space endowed with the seminorms
$\{\|\cdot\|_{\az,\,m}\}_{\az\in\zz_+^n,\,m\in\zz_+}$. The space
$\cs(\rn)$ coincides with the classical space of Schwartz functions;
see \cite[p.\,11]{b1}. The dual space of $\cs(\rn)$, namely, the space
of tempered distributions on $\rn$ is denoted by $\cs'(\rn)$.
Moreover, let
$\cs_0(\rn)\equiv \lf\{\psi\in\cs(\rn):\, \int_\rn \psi(x)\,dx=0 \r\}.$

For functions $\vz$ on $\rn$, $\psi$ on $\rnm$, $k, \ k_1,\
k_2\in\zz$, let
$\vz_k(x)\equiv b^{-k}\vz(A^{-k}x)$ and
$\psi_{k_1,\,k_2}(x)\equiv b_1^{-k_1}b_2^{-k_2}\psi(A_1^{-k_1}x_1,\,
A_2^{-k_2}x_2).$

Next, we introduce the product Lusin-area function and product
Littlewood-Paley $\vec g$-function following \cite{blyz1}.

\begin{defn}\label{d2.5}\rm
Let
$\vz^{(1)}\in\cs_0(\rr^{n})$ and $\vz^{(2)}\in\cs_0(\rr^{m})$.
Let $\vz \equiv \vz^{(1)} \otimes \vz^{(2)}$, where $\vz(x)
=\vz^{(1)}(x_1)\vz^{(2)}(x_2)$ for $x=(x_1,\,x_2)\in\rnm$.
For all $f\in\cs'(\rnm)$ and $x\in\rnm$,
define the {\it anisotropic product Lusin-area function} of $f$ by
$$ \vec S_\vz(f)(x)\equiv\lf\{\sum_{k_1,\, k_2\in\zz}
b^{-k_1}_1b^{-k_2}_2\int_{B^{(1)}_{k_1}\times
B^{(2)}_{k_2}}|\vz_{k_1,\, k_2}\ast f(x-y)|^2\,dy\r\}^{1/2}.$$
Define the {\it anisotropic product Littlewood-Paley $\vec
g$-function} of $f$ by
$$\vec g_\vz(f)(x)\equiv \lf\{\sum_{k_1,\,k_2\in\zz}|\vz_{k_1,\,k_2}\ast
f(x)|^2 \r\}^{1/2}.$$
\end{defn}

A distribution $f\in \cs'(\rnm)$ is said to {\it vanish weakly at
infinity} if for any $\vz^{(1)}\in\cs(\rrm)$ and
$\vz^{(2)}\in\cs(\rn)$, $f\ast\vz_{k_1,\,k_2}\to 0$ in $\cs'(\rnm)$
as $k_1,\,k_2\to \fz$. We denote by $\cs_\fz'(\rnm)$ the {\it set of all
$f\in \cs'(\rnm)$ vanishing weakly at infinity}.

We shall need the following existence result for functions
appearing in the Calder\'on formula, see \cite[Propositions 2.14 and 2.16]{blyz1}.

\begin{prop}\label{p2.1}
For $i=1,\,2$, let $s_i\in\zz_+$, $A_i$ be a dilation on $\rr^{n_i}$,
and $A_i^\ast$ its transpose. Then, there exist $\tz^{(i)},\, \psi^{(i)}\in
\cs(\rr^{n_i})$ such that:

(i) $\supp \tz^{(i)}\subset B_0^{(i)},\, \int_{\rr^{n_i}}
x_i^{\gz_i}\tz^{(i)}(x_i)\,dx_i=0$ for all $\gz_i\in(\zz_+)^{n_i}$
with $|\gz_i|\le s_i$, $\hat{\tz^{(i)}}(\xi_i)\ge C>0$ for $\xi_i$
in certain annulus,

(ii) $\supp\widehat{\psi^{(i)}}$ is compact and bounded away from
the origin,

(iii) $\sum_{j\in\zz}\widehat{\psi^{(i)}}((A_i^\ast)^j\xi_i)
\widehat{\tz^{(i)}} ((A^\ast_i)^j\xi_i)=1$ for all
$\xi_i\in\rr^{n_i}\setminus\{0\}$,

(iv) $\psi^{(i)}=\phi^{(i)}*\phi^{(i)}$ for some $\phi^{(i)} \in \cs(\rr^{n_i})$.
\end{prop}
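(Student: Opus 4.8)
The plan is to reproduce, in the anisotropic setting, the classical Littlewood--Paley and Calder\'on reproducing construction; this is exactly what is done in \cite[Propositions 2.14 and 2.16]{blyz1}, and one argues separately for each fixed $i\in\{1,2\}$. The transpose $A_i^\ast$ is again an expansive dilation on $\rr^{n_i}$ with $|\det A_i^\ast|=b_i$, so the structure recalled in Section \ref{s2} applies to it; in particular its step homogeneous quasi-norm $\rho^\ast$ satisfies $\rho^\ast(A_i^\ast\xi)=b_i\rho^\ast(\xi)$, whence for any fixed $\rho^\ast$-annulus $\{r_1\le\rho^\ast(\xi)\le r_2\}$ with $r_2/r_1$ large enough (namely $\ge b_i$) the dilates $(A_i^\ast)^{-j}\{r_1\le\rho^\ast(\xi)\le r_2\}$, $j\in\zz$, cover $\rr^{n_i}\setminus\{0\}$.

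\emph{Step 1: construction of $\tz^{(i)}$.} I would first pick a real, even $\nu\in\cd(\rr^{n_i})$ with $\int_{\rr^{n_i}}\nu\neq0$ supported near the origin, so that $\mu\equiv\nu\ast\nu$ has $\hat\mu=(\hat\nu)^2\ge0$ and $\hat\mu(0)>0$. Replacing $\nu$ by $x\mapsto t^{-n_i}\nu(x/t)$ for a small $t>0$ only shrinks its support; since $\hat\nu(t\xi)\to\hat\nu(0)>0$ as $t\to0^+$ uniformly for $\xi$ in compact sets, we may thus arrange $\supp\mu\subset B_0^{(i)}$ and $\hat\mu>0$ on a prescribed sufficiently wide annulus $\{r_1\le\rho^\ast(\xi)\le r_2\}$. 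Then let $\tz^{(i)}$ be the function with $\widehat{\tz^{(i)}}(\xi)=|\xi|^{2(s_i+1)}\hat\mu(\xi)$, i.e. a constant multiple of $(-\Delta)^{s_i+1}\mu$ (here $\Delta$ is the Euclidean Laplacian on $\rr^{n_i}$ and the exponent is even, so this is a differential operator and does not enlarge supports). This $\tz^{(i)}$ is real, lies in $\cd(\rr^{n_i})\subset\cs(\rr^{n_i})$ with $\supp\tz^{(i)}\subset B_0^{(i)}$, its Fourier transform vanishes to order $2s_i+2$ at the origin --- i.e. $\int_{\rr^{n_i}}x_i^{\gz_i}\tz^{(i)}(x_i)\,dx_i=0$ for all $\gz_i$ with $|\gz_i|\le s_i$ --- and $\widehat{\tz^{(i)}}\ge C>0$ on the chosen annulus; this is (i). Moreover, on that annulus $\widehat{\tz^{(i)}}=\bigl(|\xi|^{s_i+1}\hat\nu\bigr)^2$ is the square of a smooth, strictly positive function, which will be essential below.

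\emph{Step 2: construction of $\psi^{(i)}$ and the identity (iii).} Next, choose $\chi_0\in\cd(\rr^{n_i})$, $\chi_0\ge0$, supported in the open annulus $\{r_1<\rho^\ast(\xi)<r_2\}$ and strictly positive on a slightly smaller $\rho^\ast$-annulus still of width ratio $\ge b_i$; set $G(\xi)\equiv\sum_{j\in\zz}\chi_0^2((A_i^\ast)^j\xi)$. By the covering property and $\rho^\ast(A_i^\ast\xi)=b_i\rho^\ast(\xi)$, for each $\xi\neq0$ some summand is positive, so $G$ is smooth, strictly positive on $\rr^{n_i}\setminus\{0\}$, and $G(A_i^\ast\xi)=G(\xi)$. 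Define $\widehat{\psi^{(i)}}\equiv\chi_0^2/\bigl(\widehat{\tz^{(i)}}\,G\bigr)$ on $\supp\chi_0$ and $\widehat{\psi^{(i)}}\equiv0$ elsewhere; since $\widehat{\tz^{(i)}}$ and $G$ are smooth and nonvanishing on a neighbourhood of $\supp\chi_0$ while every derivative of $\chi_0^2$ vanishes on $\partial(\supp\chi_0)$, this is a function in $\cd(\rr^{n_i})$, supported in the annulus --- so $\supp\widehat{\psi^{(i)}}$ is compact and bounded away from the origin and $\psi^{(i)}\in\cs(\rr^{n_i})$, which is (ii). Then (iii) follows from the telescoping computation
\begin{align*}
\sum_{j\in\zz}\widehat{\psi^{(i)}}((A_i^\ast)^j\xi)\,\widehat{\tz^{(i)}}((A_i^\ast)^j\xi)
&=\sum_{j\in\zz}\frac{\chi_0^2((A_i^\ast)^j\xi)}{G((A_i^\ast)^j\xi)}\\
&=\frac{1}{G(\xi)}\sum_{j\in\zz}\chi_0^2((A_i^\ast)^j\xi)=1\qquad(\xi\in\rr^{n_i}\setminus\{0\}),
\end{align*}
where we used $G((A_i^\ast)^j\xi)=G(\xi)$ for every $j\in\zz$.

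\emph{Step 3: the factorization $\psi^{(i)}=\phi^{(i)}\ast\phi^{(i)}$, and the main obstacle.} On $\supp\chi_0$ we have, by Step 1, $\widehat{\tz^{(i)}}\,G=\bigl(|\xi|^{s_i+1}\hat\nu\,\sqrt{G}\bigr)^2$ with $|\xi|^{s_i+1}\hat\nu\,\sqrt{G}$ smooth and strictly positive, so I would set $\widehat{\phi^{(i)}}\equiv\chi_0/\bigl(|\xi|^{s_i+1}\hat\nu\,\sqrt{G}\bigr)$ on $\supp\chi_0$ and $0$ elsewhere; as in Step 2 this is a function in $\cd(\rr^{n_i})$, hence $\phi^{(i)}\in\cs(\rr^{n_i})$, and by construction $\bigl(\widehat{\phi^{(i)}}\bigr)^2=\widehat{\psi^{(i)}}$, that is, $\psi^{(i)}=\phi^{(i)}\ast\phi^{(i)}$, which is (iv). The two genuinely delicate points are the ``no bad zeros'' issues: keeping $\hat\nu$ (hence $\hat\mu$) strictly positive on the working annulus, which is why the rescaling of $\nu$ is needed, and --- the real crux --- extracting a \emph{smooth} square root in Step 3, which is precisely why $\widehat{\tz^{(i)}}$ is engineered to be the square of a smooth positive function there and why $\chi_0^2$, rather than a generic nonnegative bump, is placed in the numerators in Steps 2 and 3. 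Everything else --- checking that these quotients and sums of $\cd$-bumps remain Schwartz, and the covering property of sufficiently wide $\rho^\ast$-annuli stated at the outset --- is routine.
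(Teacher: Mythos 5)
Your construction is correct and is essentially the standard Calder\'on reproducing formula argument that the paper delegates to \cite[Theorem 5.8]{bh} and \cite[Propositions 2.14 and 2.16]{blyz1}: a compactly supported $\tz^{(i)}$ with prescribed vanishing moments and nonvanishing Fourier transform on a wide enough annulus, followed by a dyadic-type partition $\chi_0^2/G$ in the dual variable. In particular, your Step 3 --- arranging $\widehat{\tz^{(i)}}=\bigl(|\xi|^{s_i+1}\hat\nu\bigr)^2$ and putting $\chi_0^2$ in the numerator so that $\widehat{\psi^{(i)}}$ admits a smooth compactly supported square root --- is exactly the ``minor refinement'' the paper invokes for part (iv), so the proposal supplies correctly the details the paper only cites.
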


Parts (i)--(iii) of Proposition \ref{p2.1} were proved in
the course of the proof  of \cite[Theorem 5.8]{bh}.
Part (iv) can be shown by a minor refinement of this argument
leading to the existence of $\phi^{(i)} \in \cs(\rr^{n_i})$
such that $(\widehat{\phi^{(i)}})^2=\widehat{\psi^{(i)}}$.

The following result says that the space $L^p_w(\rnm)$ can be
characterized by the Lusin-area $\vec S$-function and the
Littlewood-Paley $\vec g$-function. Proposition \ref{p2.2} is just
\cite[Theorem 3.2]{blyz1}, which also holds for
$\vec g$-function by a similar proof.

\begin{prop}\label{p2.2}
Let $\psi\equiv\psi^{(1)}\otimes \psi^{(2)}$ be as in Proposition \ref{p2.1}.
Then, the following are equivalent for $p\in(1,\,\fz)$:

(i)
$f\in L^p_w(\rnm)$,

(ii)
 $f\in\cs'_\fz(\rnm)$ and $\vec
S_\psi (f)\in L^p_w(\rnm)$,

(iii)  $f\in\cs_\fz'(\rnm)$ and
$\vec g_\psi(f)\in L^p_w(\rnm)$.

Moreover, for all $f\in
L^p_w(\rnm)$,
\[
\|f\|_{L^p_w(\rnm)}\sim \|\vec S_\psi(f)\|_{L^p_w(\rnm)}
\sim \|\vec g_\psi(f)\|_{L^p_w(\rnm)}.
\]
\end{prop}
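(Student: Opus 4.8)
The plan is to read the statement as a weighted, bi-parameter, anisotropic version of the classical Littlewood--Paley characterization of $L^p$, and to prove it by combining the one-parameter anisotropic theory of \cite{bh} with weighted vector-valued maximal inequalities, exploiting the tensor structure $\psi=\psi^{(1)}\otimes\psi^{(2)}$. Under the operative hypothesis $w\in\ca_p(\rnm;\,\vec A)$ one has $w(\cdot,x_2)\in\ca_p(\rn;\,A_1)$ and $w(x_1,\cdot)\in\ca_p(\rrm;\,A_2)$ with constants uniformly controlled by $C_{\vec A}(w)$, so the one-parameter anisotropic maximal operators $M_{A_1},M_{A_2}$ and the strong maximal operator $M_{\vec A}\equiv M_{A_1}\circ M_{A_2}$ are bounded on $L^p_w(\rnm)$, and by self-improvement of $\ca_p$ there is $r>1$ with $w\in\ca_{p/r}$, hence the Fefferman--Stein bound $\|(\sum_j|M_{\vec A}g_j|^r)^{1/r}\|_{L^p_w}\ls\|(\sum_j|g_j|^r)^{1/r}\|_{L^p_w}$. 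The other tool is the reproducing identity of Proposition~\ref{p2.1}(iii): tensoring the one-dimensional formulas gives, for $f\in\cs'_\fz(\rnm)$, $f=\sum_{k_1,k_2\in\zz}\tz^{(1)}_{k_1}\otimes\tz^{(2)}_{k_2}*(\psi^{(1)}_{k_1}\otimes\psi^{(2)}_{k_2}*f)$ with convergence in $\cs'_\fz(\rnm)$, the point of restricting to $\cs'_\fz$ being precisely that this removes polynomial ambiguity.

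The core estimate is the square-function equivalence $\|\vec g_\psi(f)\|_{L^p_w(\rnm)}\sim\|f\|_{L^p_w(\rnm)}$ for $f\in L^p_w(\rnm)$, which I would obtain by iterating the one-parameter result one variable at a time. The one-parameter weighted anisotropic Littlewood--Paley theorem of \cite{bh}, in its Hilbert-space-valued form (a consequence of Hilbert-space-valued weighted anisotropic Calder\'on--Zygmund theory together with the maximal inequality above), gives $\|(\sum_k|\psi_k*_i G_k|^2)^{1/2}\|_{L^p_v}\sim\|(\sum_k|G_k|^2)^{1/2}\|_{L^p_v}$ for $\ell^2$-valued inputs and any Muckenhoupt $v$ on the relevant factor. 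Applying this in the $x_2$-variable for a.e.\ fixed $x_1$ (the constant uniform by $\esssup_{x_1}C_{A_2}(w(x_1,\cdot))<\fz$) and integrating in $x_1$ yields $\|\{\psi^{(2)}_{k_2}*_2f\}_{k_2}\|_{L^p_w(\rnm;\ell^2)}\sim\|f\|_{L^p_w(\rnm)}$; applying it once more in $x_1$ to this $\ell^2(k_2)$-valued function produces $\|\vec g_\psi(f)\|_{L^p_w}\sim\|f\|_{L^p_w}$. For $f\in L^p_w$ the partial convolutions are legitimate and their composition equals the full convolution by Fubini.

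With the core estimate in hand, ``(i)$\Rightarrow$(iii)'' follows once one notes the routine inclusion $L^p_w(\rnm)\subset\cs'_\fz(\rnm)$ (for $f\in L^p_w$ and Schwartz $\vz^{(1)},\vz^{(2)}$, $\|f*(\vz^{(1)}_{k_1}\otimes\vz^{(2)}_{k_2})\|_\fz\to0$ as $k_1,k_2\to\fz$, by local integrability of $w^{-1/(p-1)}$). For ``(iii)$\Rightarrow$(i)'' I would use duality: let $f\in\cs'_\fz(\rnm)$ with $\vec g_\psi(f)\in L^p_w$ and $w'\equiv w^{-1/(p-1)}\in\ca_{p'}(\rnm;\,\vec A)$. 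For $h$ in the dense class $\cs(\rnm)\subset L^{p'}_{w'}(\rnm)$, the reproducing identity (moving one reflected copy of $\tz$ onto $h$; since $\tz^{(i)}\in\cs_0$ all square functions in sight are finite) together with pointwise Cauchy--Schwarz in $(k_1,k_2)$ gives $|\la f,h\ra|\le\int_\rnm\vec g_\psi(f)(x)\,G(h)(x)\,dx$, where $G(h)$ is the $\vec g$-function built from the reflected $\tz^{(1)}\otimes\tz^{(2)}$; by H\"older and the core estimate applied to these reflections with weight $w'$, this is $\ls\|\vec g_\psi(f)\|_{L^p_w}\|h\|_{L^{p'}_{w'}}$, so $f\in L^p_w$ with the asserted norm bound, and the same pairing shows the reproducing series converges to $f$ in $L^p_w$.

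Finally, to interchange $\vec g_\psi$ and $\vec S_\psi$: since $\widehat{\psi^{(i)}}$ is compactly supported and bounded away from the origin, each $\psi^{(1)}_{k_1}\otimes\psi^{(2)}_{k_2}*f$ is band-limited at scale $(k_1,k_2)$, so a Plancherel--P\'olya/Peetre-type argument gives, for $r\in(0,1)$ with $p/r>q_w$, $\sup_{y\in B^{(1)}_{k_1}\times B^{(2)}_{k_2}}|\psi^{(1)}_{k_1}\otimes\psi^{(2)}_{k_2}*f(x-y)|^r\ls M_{\vec A}(|\psi^{(1)}_{k_1}\otimes\psi^{(2)}_{k_2}*f|^r)(x)$, whence $\vec S_\psi(f)(x)\ls(\sum_{k_1,k_2}[M_{\vec A}(|\psi^{(1)}_{k_1}\otimes\psi^{(2)}_{k_2}*f|^r)(x)]^{2/r})^{1/2}$, and the Fefferman--Stein inequality with exponents $(p/r,2/r)$ gives $\|\vec S_\psi(f)\|_{L^p_w}\ls\|\vec g_\psi(f)\|_{L^p_w}$; the reverse bound is symmetric, the value at $x$ of a band-limited function being controlled by $M_{\vec A}$ of its $L^r$-average over the corresponding dilated rectangle. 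This closes the chain of equivalences with the stated quantitative comparisons. I expect the main obstacle to be the ``(iii)$\Rightarrow$(i)'' step together with the $\cs'_\fz$-convergence of the tensor reproducing formula: one must verify carefully that membership in $\cs'_\fz(\rnm)$ is exactly what kills the polynomial ambiguities of the two-parameter Calder\'on identity, and that the bi-parameter sums are handled by genuinely two-parameter estimates — which is where the Hilbert-space-valued form of the one-parameter theory of \cite{bh}, rather than its scalar version, becomes indispensable.
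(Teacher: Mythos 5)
The paper gives no proof of this proposition: it is quoted directly from \cite[Theorem 3.2]{blyz1}, with only the remark that the $\vec g$-function version follows by a similar argument. Your outline --- iterating the $\ell^2$-valued one-parameter weighted anisotropic Littlewood--Paley theory of \cite{bh} in each variable (with constants uniform via $\esssup_{x_i}C_{A_j}(w)<\fz$), the Calder\'on reproducing formula on $\cs'_\fz(\rnm)$, duality for (iii)$\Rightarrow$(i), and a Plancherel--P\'olya estimate plus the weighted vector-valued maximal inequality to pass between $\vec g_\psi$ and $\vec S_\psi$ --- is precisely the standard route taken in that reference, and I see no gaps in it.
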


Finally, we recall the definition of weighted
anisotropic product Hardy spaces in \cite{blyz1}.

\begin{defn}\label{d2.6}\rm
Let $w\in \wfz$ and $p\in (0,\ 1]$.
Let $\psi=\psi^{(1)} \otimes \psi^{(2)}$ be as
in Proposition \ref{p2.1}. The {\it weighted
anisotropic product Hardy space} is defined by
\begin{eqnarray*}
&\hp\equiv & \{f\in \cs'_\fz(\rnm):\,\|f\|_\hp\equiv\|\vec
S_\psi(f)\|_\lp<\fz\}.
\end{eqnarray*}
\end{defn}

\section{Proof of Theorem \ref{t1.1}}\label{s3}

\hskip\parindent To prove Theorem \ref{t1.1}, we need the following
decomposition technique of kernels, which adapts the
methods established by Nagel and Stein \cite[Lemma 3.5.1]{ns04} to our
setting. For the convenience of the reader, we present a detailed proof.

\begin{lem}\label{l3.1}
Let $N\in\nn$ and $\psi\in\cs_0(\rn)$. For any
$M>0$, there exists a constant $c>0$ and a decomposition $\psi=\sum_{k=0}^\fz
b^{-kM}\psi^{(k)}$, such that each $c\psi^{(k)}\in\cs_0(\rn)$
is an $N$-normalized bump function
associated to $B_k$.
\end{lem}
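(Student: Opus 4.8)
The plan is to build the decomposition $\psi = \sum_{k=0}^\infty b^{-kM}\psi^{(k)}$ by peeling off, at each scale $k$, the portion of $\psi$ that "lives" on the dilated annulus $B_{k+1}\setminus B_k$, and then correcting for the fact that such a naive truncation destroys the vanishing moment condition. Since $\psi$ is Schwartz, for every $m\in\nn$ and every multi-index $\az$ we have a bound $|\partial^\az\psi(x)| \ls [1+\rho(x)]^{-m}$ (using that $\rho$ is equivalent to any homogeneous quasi-norm and that Schwartz decay in $|x|$ transfers to decay in $\rho(x)$ up to a power depending only on $A$). In particular, on $B_{k+1}$ one has $\rho(x)\sim b^k$, so a cutoff of $\psi$ to $B_{k+1}$, after rescaling by $A^{-k}$ as in Definition \ref{d1.2}, gains a factor essentially $b^{-km}$ in sup-norm; choosing $m$ large compared to $M$ (and to the number of derivatives $N$, accounting for the extra powers of the operator norm of $A^{-1}$ picked up when differentiating $\psi(A^k\cdot)$) is what produces the prefactor $b^{-kM}$ with a leftover that is an $N$-normalized bump function after multiplication by a fixed constant $c$.

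First I would fix a smooth partition of unity adapted to the dilation structure: choose $\eta\in\cd(\rn)$ with $\supp\eta\subset B_1$, $0\le\eta\le1$, and $\eta\equiv1$ on $B_0$, set $\eta_k(x)\equiv\eta(A^{-k}x)$ so that $\supp\eta_k\subset B_{k+1}$, and let $\zeta_0\equiv\eta_0$, $\zeta_k\equiv\eta_k-\eta_{k-1}$ for $k\ge1$, so that $\sum_{k=0}^\infty\zeta_k\equiv1$ on $\rn$ and $\supp\zeta_k\subset B_{k+1}\setminus B_{k-1}$ for $k\ge1$. Writing $\psi=\sum_k\zeta_k\psi$, each piece $\zeta_k\psi$ is supported in $B_{k+1}$ and, by the Schwartz decay of $\psi$ together with the scaling $\partial^\az(\zeta_k)(x)=A^{-k}$-rescaled derivatives of $\zeta$ (whose sup-norms are controlled uniformly in $k$ after the rescaling in Definition \ref{d1.2}), the rescaled function $(\zeta_k\psi)(A^k\cdot)$ satisfies $\|\partial^\az[(\zeta_k\psi)(A^k\cdot)]\|_{L^\infty}\ls b^{-km}$ for $|\az|\le N$, once $m$ is chosen large enough (this uses $\|A^{-k}\|\ls(\lz_-)^{-k}$ and $b=|\det A|$, so that negative powers of $b$ dominate). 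However $\zeta_k\psi$ need not have vanishing integral, so the partial sums $\sum_{k\le K}\zeta_k\psi$ do not lie in $\cs_0(\rn)$; I would fix this by the standard telescoping/moment-correction trick.

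The key step is the moment correction. Let $\mu_k\equiv\int_\rn\zeta_k(x)\psi(x)\,dx$; since $\int_\rn\psi=0$ we have $\sum_{k=0}^\infty\mu_k=0$, hence the tail sums $r_k\equiv\sum_{j>k}\mu_j=-\sum_{j\le k}\mu_j$ satisfy $|r_k|\ls b^{-km}$ by the decay just established (the series converges absolutely). Fix once and for all a function $\Theta\in\cd(\rn)$ with $\supp\Theta\subset B_1\setminus B_0$ (so that $\Theta(A^{-k}\cdot)$ is supported in $B_{k+1}\setminus B_k$) and $\int_\rn\Theta=1$, and set $\Theta_k(x)\equiv b^{-k}\Theta(A^{-k}x)$, so $\int_\rn\Theta_k=1$ and $\|\partial^\az[\Theta_k(A^k\cdot)]\|_{L^\infty}\ls1$ uniformly in $k$. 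Now define
\[
\widetilde\psi^{(k)}\equiv\zeta_k\psi+r_k\Theta_k-r_{k-1}\Theta_{k-1}\qquad(k\ge1),\qquad
\widetilde\psi^{(0)}\equiv\zeta_0\psi+r_0\Theta_0 .
\]
Then $\sum_{k=0}^\infty\widetilde\psi^{(k)}=\sum_k\zeta_k\psi+\lim_{K\to\infty}r_K\Theta_K=\psi$ (the correction terms telescope and the last term vanishes since $r_K\to0$ and $\Theta_K$ is bounded in $L^1$ with shrinking-in-the-right-scale support — more carefully, one checks convergence in $\cs(\rn)$ or at least in $\cs_0(\rn)$ using the uniform-in-$k$ bounds and $|r_K|\to0$). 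Moreover $\int_\rn\widetilde\psi^{(k)}=\mu_k+r_k-r_{k-1}=\mu_k+r_k-r_{k-1}=0$ by the definition of $r_k$, and $\supp\widetilde\psi^{(k)}\subset B_{k+1}$, so $\widetilde\psi^{(k)}\in\cs_0(\rn)$ and after rescaling by $A^{-k}$ it is supported in $B_0$. Finally, combining the bound $b^{-km}$ for the $\zeta_k\psi$ term with $|r_k|\ls b^{-km}$, $|r_{k-1}|\ls b^{-(k-1)m}\ls b^{-km}$ (absorbing the single factor $b^m$ into the implied constant) and the uniform bounds on $\Theta_k(A^k\cdot)$, we get $\|\partial^\az[\widetilde\psi^{(k)}(A^k\cdot)]\|_{L^\infty}\ls b^{-km}$ for $|\az|\le N$. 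Choosing $m\ge M$ and then setting $\psi^{(k)}\equiv b^{kM}\widetilde\psi^{(k)}$ gives $\psi=\sum_{k=0}^\infty b^{-kM}\psi^{(k)}$ with $\|\partial^\az[\psi^{(k)}(A^k\cdot)]\|_{L^\infty}\ls b^{k(M-m)}\ls1$ for $|\az|\le N$; taking $c$ to be the reciprocal of the (uniform) implied constant makes each $c\psi^{(k)}$ an $N$-normalized bump function associated to $B_k$, and $c\psi^{(k)}\in\cs_0(\rn)$ as required.

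The main obstacle is not any single estimate but bookkeeping the powers: one must verify that the negative powers of $b$ genuinely beat the positive powers of $\|A^{-1}\|$ coming from differentiating rescaled cutoffs $N$ times, i.e. that choosing $m$ large enough makes the scheme work for the prescribed $M$ and $N$ simultaneously — and that the telescoping correction, needed precisely because truncation kills vanishing moments, indeed converges and does not spoil the support or the bump bounds. Both are routine once one uses that $\rho$ is comparable to a genuine homogeneous quasi-norm and that all the rescaled cutoff functions $\zeta(A^{-k}\cdot)$, $\Theta(A^{-k}\cdot)$ have $A^{-k}$-rescaled derivatives bounded uniformly in $k$ by construction, so I would present those two points with care and leave the elementary estimates to the reader.
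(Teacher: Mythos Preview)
Your approach is essentially identical to the paper's: both use a telescoping smooth partition of unity $\psi=\sum_k D_k$ adapted to the dilated balls, then correct the lost moment by adding and subtracting rescaled copies of a fixed bump with integral one, so that the corrections telescope and each piece lands in $\cs_0(\rn)$ with the right size after multiplication by $b^{kM}$; the paper's formula $\tl D_k=D_k-d_kb^{-k}\tl\tz(A^{-k}\cdot)+s_k[b^{-(k-1)}\tl\tz(A^{-(k-1)}\cdot)-b^{-k}\tl\tz(A^{-k}\cdot)]$ is, after the substitution $s_k=-r_{k-1}$, $s_{k+1}=-r_k$, exactly your $\zeta_k\psi+r_k\Theta_k-r_{k-1}\Theta_{k-1}$.

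Two small slips to fix: (i) with your choice $\supp\eta\subset B_1$ and $\supp\Theta\subset B_1\setminus B_0$, your $\tl\psi^{(k)}$ is supported in $B_{k+1}$, not $B_k$ as the lemma requires --- shift the cutoff down one level (take $\supp\eta\subset B_0$, $\eta\equiv 1$ on $B_{-1}$, as the paper does) or relabel; (ii) the growth factor that must be beaten by Schwartz decay comes from the chain rule applied to $\psi(A^k\cdot)$, hence involves $\|A^k\|^N\ls\lz_+^{kN}$, not $\|A^{-k}\|$ --- the rescaled cutoffs $\zeta_k(A^k\cdot)=\eta(\cdot)-\eta(A\cdot)$ are $k$-independent and cause no trouble.
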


\begin{proof}
Let $\tz\in \ccc^\fz(\rn)$ be a non-negative function
such that $\supp\tz\subset B_0$,
$\tz(x)=1$ for all $x\in B_{-1}$, and
$\|\partial^\az\tz\|_{L^\infty(\rn)}\le 1$ for $|\az|\le N$. Obviously, $\tz$ is
an $N$-normalized bump function associated to $B_0$.
For all $x\in\rn$ and
$k\in\nn$, set $D_0(x)\equiv \psi(x)\tz(x)$ and $D_k(x)\equiv
\psi(x)[\tz(A^{-k}x)-\tz(A^{-(k-1)}x)]$. It is easy to check that
$\psi(x)=\sum_{k=0}^\infty D_k(x)$ pointwise.  For any $k\in\zz_+$,
let $d_k\equiv \int_\rn D_k(x)\,
dx$, $s_0\equiv 0$ and $s_k=\sum_{j=0}^{k-1}d_j$ for $k\ge 1$.

Notice that for any $k\in\nn$, we have $\supp D_k\subset B_{k}
\setminus B_{k-2}$. Fix $M>0$. Since $D_k(x)\ne 0$ implies
that $\rho(x)\sim b^{k}$, we have
\begin{equation}\label{e1}
|D_k(x)|\ls
[\rho(x)]^{-M-1}\ls b^{-(M+1)k},
\end{equation}
due to the fact that $\psi\in\cs_0(\rn)$ and $\|\tz\|_{L^\infty(\rn)}\le 1$.
From this and
$\supp D_k\subset B_{k}$, it follows that
$\sum_{k=0}^\fz\int_\rn |D_k(x)|\, dx\ls 1$. Using that
$\psi=\sum_{k=0}^\fz D_k$ and $\psi\in\cs_0(\rn)$, we
obtain $\sum_{k=0}^\fz d_k=\int_\rn \psi(x)\, dx=0.$ Thus, we also
have $s_k=-\sum_{j\ge k}d_j$. Moreover, from \eqref{e1} and
$\supp D_k\subset B_{k}$, it follows that $|d_k|\ls
b^{-kM}$, and hence $|s_k|\ls b^{-kM}$.

For any  $k\in\zz_+$ and $x\in\rn$, we define
\[
\tl D_k(x)\equiv D_k(x)-d_kb^{-k}\tl\tz(A^{-k}x)+s_k
[b^{-(k-1)}\tl\tz(A^{-(k-1)}x)- b^{-k}\tl\tz(A^{-k}x)],
\]
where $\tl\tz(x)=\tz(x)/\|\tz\|_{L^1(\rn)}$.
We claim that $\psi^{(k)}\equiv b^{Mk}\wz
D_k\in\cs_0(\rn)$ is the desired constant multiple of an
$N$-normalized bump function associated to $B_k$. Indeed, it is easy
to check that $\wz D_k\in\ccc^\fz(\rn)$ with $\supp \tl D_k\subset
B_k$, $\int_\rn \tl D_k(x)\,dx=0$. Using $\sum_{k=0}^\fz
d_k=0$ and $s_k=-\sum_{j\ge k}d_k$, by Abel's summation, we have
\[
\sum_{k=0}^\fz
s_k[b^{-(k-1)}\tl\tz(A^{-(k-1)}x)-b^{-k}\tl\tz(A^{-k}x)]=\sum_{k=0}^\fz
d_kb^{-k}\tl\tz(A^{-k}x).
\]
This together with $\psi=\sum_{k=0}^\infty D_k$
implies that $\psi=\sum_{k=0}^\fz \tl D_k=\sum_{k=0}^\fz
b^{-Mk}\psi^{(k)}$.

Finally, it remains to show that
$\|\partial^\az \wz D_k(A^k\cdot)\|_{L^\infty(\rn)}\ls b^{-Mk}$ for any
$k\in\zz_+$ and $|\az|\le N$. Since $\|\partial^\az\tz\|_{L^\infty(\rn)},\
\|\partial^\az\tz(A\cdot)\|_{L^\infty(\rn)}\ls 1$,
and $|s_k|,\,|d_k|\ls b^{-Mk}$, it suffices to prove
$$\|\partial^\az D_k(A^k\cdot)\|_{L^\infty(\rn)}\ls b^{-Mk}.$$
Recall that $\supp D_k \subset B_{k} \setminus B_{k-2}$ for $k\in\nn$.
Thus, we only need to check
$|\partial^\az D_k(A^k\cdot)(x)|\ls b^{-Mk}$
for all $x \in B_0\setminus B_{-2}$ for all $|\az|\le N$.
Since $\psi\in\cs(\rn)$,
for all $x \in B_0\setminus B_{-2}$ and for all $|\az|\le N$, we have
\begin{eqnarray}\label{e3.x0}
 |\partial^\az D_k(A^k\cdot)(x)|
&& =|\partial^\az [\psi(A^k\cdot)(\theta(\cdot)-\theta(A^{-1}\cdot)](x)|\\
&&\ls \sum_{|\bz|\le|\az|}|\partial^\bz \psi(A^k\cdot)(x)|
\ls \|A^k\|^{|\alpha|} \sum_{|\bz|\le|\az|}|\partial^\bz \psi(A^k x)|
\nonumber \\
&& \ls \|A\|^{Nk}\rho(A^k x)^{-M'}
\ls \|A\|^{Nk} b^{-kM'} \ls b^{-kM}.\nonumber
\end{eqnarray}
This finishes the proof of
the claim and hence Lemma \ref{l3.1}.
\end{proof}

For $i=1,2$, let $A_i$ be a dilation on $\rr^{n_i}$ as in Definition \ref{d1.1}. Let
$\lz_{i,\,-}$ and $\lz_{i,\,+}$ be two {\it positive numbers} such
that
$$1<\lz_{i,\,-}<\min\{|\lz|: \lz\in\sz(A_i)\}\le\max\{|\lz|:
\lz\in\sz(A_i)\}<\lz_{i,\,+}.$$
In the case when $A_i$ is diagonalizable over $\mathbb C$,
we can even take $\lz_{i,\,-}=\min$ and $\lz_{i,\,+}=\max$ above.
Otherwise, we need to choose them sufficiently close to these
equalities according to what we need in our arguments. Let
$\zeta_{i,\,\pm}=\log_{b_i}\lz_{i,\,\pm}$.
It is useful to make some remarks about Definitions \ref{d1.3} and \ref{d1.3A}.

\begin{rem}\rm\label{r1.1}
(i) One can show that if $T$ is a PASIO of order $(1,1)$, then it
is also a PASIO of order $0$. In fact, this is a consequence of Lemma
\ref{l3.2} below.

(ii) In Definition \ref{d1.3A}, the range of $\ez_i$ is
effectively restricted to the interval $(0,\,\log_{b_i}|\lz_{i,\,+}|]$,
where $\lz_{i,\,+}$ denotes the
largest eigenvalue of $A_i$ in absolute value. Again we
will see this in the one parameter setting.
In fact, assume that $\ez>\log_b|\lz_+|$ and
$|K(x)|\le C_1[\rho(x)]^{-1}$ and $|K(x+h)-K(x)|\le C_1[\rho(h)]^\ez
[\rho(x)]^{-1-\ez}$ for $\rho(h)\le b^{-2\sz}\rho(x)$ and $x\ne0$.
Choose $\lz $ such that $|\lz_+|<\lz <b^\ez$ and let
$\zeta \equiv \log_b\lz $. For any $x\ne0$, when
$\rho(h)\le\min\{1,\,b^{-2\sz}\rho(x)\}$, by
$[\rho(h)]^{\zeta }\le C_1|h|$ (see \cite[(3.3)]{b1}), we have
$$|K(x+h)-K(x)|\le C_1[\rho(x)]^{-1-\ez} |h|^{\ez/\zeta }\le C_1
[\rho(x)]^{-1-\ez} |h|,$$ which implies that $K$ is locally Lipschitz
continuous away from $0$. Moreover,  for all $x\ne0$,
$$\limsup_{h\to0}\frac1{|h|}|K(x+h)-K(x)|
\le C_1[\rho(x)]^{-1-\ez}\limsup_{h\to0}[\rho(h)]^{\ez-\zeta }=0,$$
which implies that $K$
is a constant function away from $0$ and thus,
by $|K(x)|\le C_1[\rho(x)]^{-1}$, we further have $K(x)=0$ for all $x\ne0$.
\end{rem}

\begin{lem}\label{l3.2}
Let $K$ be the kernel of a PASIO of order $(s_1+1,s_2+1)$,
where $s_1,s_2 \in \zz_+$. Then, there exist positive constants
$C$ and $\epsilon_i$ such that $K$ has the following 3 additional properties:

$(K1')$ for all $(x_1,\,x_2)\in\boz_{n\times m}$ with $\rho_1(x_1)=b_1^{\ell_1}$
for certain $\ell_1\in\zz$,
$h_1\in\rn$ with $\rho_1(h_1)\le b_1^{-2\sz_1}\rho_1(x_1)$ and
$\az_1\in\zz_+^n$ with $|\az_1|=s_1$,
\begin{eqnarray*}
 \bigg|\Delta_{A_1^{-\ell_1}h_1}^{(1)}
\partial^{\az_1}_1[K(A_1^{\ell_1}\cdot,
\, x_2)](A_1^{-\ell_1}x_1)\bigg| \le
C_1\frac{[\rho_1(h_1)]^{\ez_1}}{[\rho_1(x_1)]^{1+\ez_1}}\frac
1{\rho_2(x_2)}.
\end{eqnarray*}
This also holds with the roles of $x_1$ and $x_2$ interchanged;

$(K1'')$ for all $(x_1,\,x_2)\in\boz_{n\times m}$ with $\rho_i(x_i)=b_i^{\ell_i}$
 for certain $\ell_i\in\zz$,
$h_i\in\rr^{n_i}$ with $\rho_i(h_i)\le
b_i^{-2\sz_i}\rho_i(x_i)$ and $\az_i\in\zz_+^{n_i}$ with $|\az_i|=
s_i$, $i=1,\,2$,
\begin{eqnarray*}
&&\bigg|\Delta_{A_1^{-\ell_1}h_1}^{(1)}\Delta^{(2)}_{A_2^{-\ell_2}h_2}
\partial^{\az_1}_1\partial^{\az_2}_2[K(A_1^{\ell_1}\cdot,
\, A_2^{\ell_2}\cdot)](A_1^{-\ell_1}x_1,\,
A_2^{-\ell_2}x_2)\bigg|\le C_1
\frac{[\rho_1(h_1)]^{\ez_1}}{[\rho_1(x_1)]^{1+\ez_1}}
\frac{[\rho_2(h_2)]^{\ez_2}}{[\rho_2(x_2)]^{1+\ez_2}};
\end{eqnarray*}

$(K3')$ the kernel $K^{\psi^{(2)},\,k_2}$ as in (K3)
satisfies that
for all $x_1\in\rr^n\setminus\{0\}$,
$ h_1\in\rr^n$ with $\rho_1(h_1)\le b_1^{-2\sz_1}\rho_1(x_1)$,
and $\az_1\in\zz_+^n$ with $|\az_1|=s_1$,
\begin{eqnarray*}
&&\lf|\Delta^{(1)}_{A_1^{-\ell_1}h_1}\partial_1^{\az_1}
[K^{\psi^{(2)},\,k_2}(A_1^{\ell_1}\cdot)](A_1^{-\ell_1}x_1)\r| \le
C_1 \frac{[\rho_1(h_1)]^{\ez_1}}{[\rho_1(x_1)]^{1+\ez_1}}.
\end{eqnarray*}
This also holds with the roles of $x_1$ and $x_2$ interchanged.
\end{lem}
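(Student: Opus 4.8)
The plan is to derive each of the three difference-type estimates $(K1')$, $(K1'')$, $(K3')$ from the corresponding derivative estimates in Definition \ref{d1.3}, namely (K1) and (K3), by the standard device of writing a first difference of a function as the integral of its derivative along a segment and then estimating that integral using the next derivative, which is controlled because $T$ has order $(s_1+1,s_2+1)$ rather than $(s_1,s_2)$. Concretely, for $(K3')$: since $T$ is a PASIO of order $(s_1+1,s_2+1)$, the auxiliary operator $T^{\psi^{(2)},k_2}$ has a kernel $K^{\psi^{(2)},k_2}$ whose derivatives $\partial_1^{\az_1}[K^{\psi^{(2)},k_2}(A_1^{\ell_1}\cdot)](A_1^{-\ell_1}x_1)$ satisfy the bound $C_1[\rho_1(x_1)]^{-1}$ for all $|\az_1|\le s_1+1$, in particular for $|\az_1|=s_1$ and $|\az_1|=s_1+1$. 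Fixing $|\az_1|=s_1$ and writing $g(\cdot)\equiv\partial_1^{\az_1}[K^{\psi^{(2)},k_2}(A_1^{\ell_1}\cdot)]$, one has the exact identity
\[
\Delta^{(1)}_{A_1^{-\ell_1}h_1}g(A_1^{-\ell_1}x_1)=\int_0^1 \nabla g\bigl(A_1^{-\ell_1}x_1+tA_1^{-\ell_1}h_1\bigr)\cdot A_1^{-\ell_1}h_1\,dt,
\]
and each component of $\nabla g$ is an $|\az_1|+1=s_1+1$ order derivative of $K^{\psi^{(2)},k_2}(A_1^{\ell_1}\cdot)$, hence bounded by $C_1[\rho_1(A_1^{\ell_1}(A_1^{-\ell_1}x_1+tA_1^{-\ell_1}h_1))]^{-1}=C_1[\rho_1(x_1+th_1)]^{-1}$. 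The restriction $\rho_1(h_1)\le b_1^{-2\sz_1}\rho_1(x_1)$ together with \eqref{e2.2}/\eqref{e2.3} and the quasi-triangle inequality $\rho_1(x+y)\le b_1^{\sz_1}\max\{\rho_1(x),\rho_1(y)\}$ guarantees that $\rho_1(x_1+th_1)\sim\rho_1(x_1)$ uniformly in $t\in[0,1]$, so that factor is $\sim[\rho_1(x_1)]^{-1}=b_1^{-\ell_1}$. For the factor $|A_1^{-\ell_1}h_1|$, I would convert back to the homogeneous quasi-norm via the comparison $|A_1^{-\ell_1}h_1|\lesssim [\rho_1(A_1^{-\ell_1}h_1)]^{\zeta_{1,-}}=[\,b_1^{-\ell_1}\rho_1(h_1)\,]^{\zeta_{1,-}}$ (using $\rho_1(A_1^{-\ell_1}y)=b_1^{-\ell_1}\rho_1(y)$ and the estimate $[\rho_1(y)]^{\zeta_{1,-}}\lesssim|y|$ of \cite[(3.3)]{b1}, valid when $\rho_1(y)\le 1$, which holds here since $\rho_1(A_1^{-\ell_1}h_1)=b_1^{-\ell_1}\rho_1(h_1)\le b_1^{-2\sz_1}\le 1$). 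Setting $\ez_1\equiv\zeta_{1,-}$ collects everything into $C_1 b_1^{-\ell_1}\cdot b_1^{-\ell_1\ez_1}[\rho_1(h_1)]^{\ez_1}=C_1[\rho_1(h_1)]^{\ez_1}[\rho_1(x_1)]^{-1-\ez_1}$, which is exactly $(K3')$.

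The estimate $(K1')$ is proved in the same way with $K(\cdot,x_2)$ (with $x_2$ frozen) in place of $K^{\psi^{(2)},k_2}$: (K1) for order $(s_1+1,s_2+1)$ with $\az_2=0$ gives that all $x_1$-derivatives of $[K(A_1^{\ell_1}\cdot,x_2)](A_1^{-\ell_1}x_1)$ up to order $s_1+1$ are bounded by $C_1[\rho_1(x_1)]^{-1}[\rho_2(x_2)]^{-1}$; the same one-variable mean-value argument in the $x_1$ slot then produces the claimed bound, the frozen factor $[\rho_2(x_2)]^{-1}$ riding along unchanged. For $(K1'')$ I would apply the mean value / fundamental theorem of calculus argument twice — once in $x_1$ and once in $x_2$ — starting from the mixed derivative estimate in (K1) for multi-indices $|\az_1|\le s_1+1$, $|\az_2|\le s_2+1$, taking the two first differences one coordinate block at a time. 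Writing $\Delta^{(1)}_{A_1^{-\ell_1}h_1}\Delta^{(2)}_{A_2^{-\ell_2}h_2}F$ as a double integral $\int_0^1\!\int_0^1 \partial_{t_1}\partial_{t_2}F(\cdots)\,dt_1dt_2$ of the mixed gradient, one sees each integrand is a derivative of $[K(A_1^{\ell_1}\cdot,A_2^{\ell_2}\cdot)]$ of order $(|\az_1|+1,|\az_2|+1)=(s_1+1,s_2+1)$ evaluated at a point where both $\rho_1$ and $\rho_2$ are comparable to $\rho_1(x_1)$ and $\rho_2(x_2)$ respectively (again by the size restriction on $h_1,h_2$), hence bounded by $C_1[\rho_1(x_1+t_1h_1)]^{-1}[\rho_2(x_2+t_2h_2)]^{-1}\sim C_1[\rho_1(x_1)]^{-1}[\rho_2(x_2)]^{-1}$; multiplying by $|A_1^{-\ell_1}h_1|\,|A_2^{-\ell_2}h_2|$ and converting to quasi-norms as above with $\ez_i\equiv\zeta_{i,-}$ yields $(K1'')$. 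The symmetric statements (roles of $x_1,x_2$ interchanged) follow because the hypotheses of Definition \ref{d1.3} are symmetric.

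The main technical point — the only place any care is needed — is the control of $\rho_i(x_i+t_ih_i)$ on the segment of integration and the passage between the Euclidean length $|A_i^{-\ell_i}h_i|$ and the homogeneous quasi-norm $\rho_i(h_i)$. For the first, one must check that $\rho_i(h_i)\le b_i^{-2\sz_i}\rho_i(x_i)$ indeed forces $\rho_i(x_i+t h_i)$ to stay in a fixed ratio of $\rho_i(x_i)$ for all $t\in[0,1]$; this is precisely the role of the extra $b_i^{-2\sz_i}$ (rather than merely $b_i^{-\sz_i}$) in the hypothesis, and it comes out of \eqref{e2.2} and \eqref{e2.3} — this is the same margin already exploited in Remark \ref{r1.1}. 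For the second, the inequality $[\rho_i(y)]^{\zeta_{i,-}}\lesssim|y|$ from \cite[(3.3)]{b1} is what legitimately introduces the exponent $\ez_i=\zeta_{i,-}=\log_{b_i}\lz_{i,-}>0$; note this is consistent with Remark \ref{r1.1}(ii), which says $\ez_i$ cannot be taken larger than $\log_{b_i}|\lz_{i,+}|$, and $\zeta_{i,-}<\log_{b_i}|\lz_{i,+}|$. Everything else is bookkeeping: one fixes $\ez_i\equiv\zeta_{i,-}$ and $C$ depending only on $C_1$, the dimension, $\sz_i$, and the quasi-norm constants, and the proof is complete.
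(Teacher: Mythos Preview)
Your approach is correct and essentially identical to the paper's: both use the mean value theorem (Taylor's formula) to bound the first difference by $|A_1^{-\ell_1}h_1|$ times a supremum of $(s_1{+}1)$-order derivatives (controlled by (K3) or (K1)), then convert $|A_1^{-\ell_1}h_1|$ to a power of $\rho_1(h_1)/\rho_1(x_1)$ via the comparison between $|\cdot|$ and $\rho_1$; the paper writes out only $(K3')$ explicitly and declares the rest similar, whereas you spell out all three cases. One minor slip: the inequality you cite parenthetically should read $|y|\lesssim[\rho_1(y)]^{\zeta_{1,-}}$ for $\rho_1(y)\le 1$, not the reverse --- but that is precisely the direction you actually (and correctly) use.
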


\begin{proof}
We will only prove (K3${}'$). Other properties are shown in the
same fashion. Let $K\equiv K^{\psi^{(2)},k_2}$ be the kernel as in (K3).
Assume that $\rho(x_1)=b_1^{\ell_1}$ and $\rho(h_1)
\le b_1^{-2\sigma_1 +\ell_1}$. Take any $\ez_1\in (0,\,
\log_{b_1}\lz_{1,-})$. By (K3) and the Taylor's formula,
for $|\alpha_1|=s_1$ we have
\begin{eqnarray*}
|\Delta_{A_1^{-\ell_1}h_1} \partial^{\alpha_1}_1
[K(A^{\ell_1}\cdot)](A_1^{-\ell_1}x_1)|
&& \le |A_1^{-\ell_1}h_1| \sup_{\rho(z_1)
\le b_1^{-2\sz_1 +\ell_1} }|\nabla\partial^{\alpha_1}_1
[K(A_1^{\ell_1}\cdot)](A_1^{-\ell_1}(x_1+z_1))|\\
&&\ls |A_1^{-\ell_1}h_1| b_1^{-\ell_1}
\ls [\rho(h_1)]^{\ez_1} [\rho(x_1)]^{-(1+\ez_1)},
\end{eqnarray*}
which completes the proof.
\end{proof}

The following lemma plays a key role in the proof of Theorems
\ref{t1.1} and \ref{t1.2} by generalizing \cite[Lemma
1]{hy05} to the anisotropic setting and to the higher order partial
derivatives of corresponding kernels.

\begin{lem}\label{l3.2A}
Let $K$ be the kernel of a PASIO of order $(s_1+1,s_2+1)$,
where $s_1,s_2 \in \zz_+$. For $i=1,\,2$, let
$\vz^{(i)}\in\cs_0(\rr^{n_i})$ be an $(N_i+s_i+1)$-normalized
bump function associated to some dilated ball $B^{(i)}_{j_i}$,
where $j_i\in\zz_+$ and $N_i\in\nn$ is as in (K2) and (K3) of Definition \ref{d1.3}.
For all $k_1,\,k_2\in\zz$, define
$K_{k_1,\,k_2}\equiv K\ast\vz_{k_1,\,k_2}$, where
$\vz\equiv\vz^{(1)}\otimes \vz^{(2)}$.
Then, there exist
positive constants $C$ and $\ez_i$ such that:
for all $h_i,\,x_i,\,y_i\in\rr^{n_i}$ with
$\rho_i(x_i)=b_i^{\ell_i}$ for certain $\ell_i\in\zz$ and
$\rho_i(x_i-y_i)<b_i^{k_i}$, and $\az_i\in\zz_+^{n_i}$ with
$|\az_i|\le s_i$, $i=1,\,2$,
$$\ |\partial^{\az_1}_1\partial^{\az_2}_2
[K_{k_1,\,k_2}(A_1^{\ell_1}\cdot,\,A_2^{\ell_2}\cdot )](
A_1^{-\ell_1}y_1,\,A_2^{-\ell_2}y_2)|\le
C\prod_{i=1}^2\dfrac{b_i^{k_i\ez_i}}{[b_i^{k_i}
+b_i^{-j_i}\rho_i(x_i)]^{1+\ez_i}}.$$
\end{lem}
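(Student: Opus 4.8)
The plan is to reduce the bound to two kinds of estimates on the convolved kernel $K_{k_1,k_2}=K\ast\vz_{k_1,k_2}$ depending on the relative size of the scales $b_i^{k_i}$ and the ``location'' $b_i^{-j_i}\rho_i(x_i)$. Fix $i$ and think of the one-parameter mechanism in the variable $x_i$; the product estimate then follows by running the argument in each factor separately (the kernel conditions (K1), (K3) and their differenced versions $(K1')$, $(K1'')$, $(K3')$ from Lemma \ref{l3.2} are exactly set up so the two parameters decouple). First I would record that, since $\vz^{(i)}$ is supported in $B^{(i)}_{j_i}$, the function $\vz^{(i)}_{k_i}=\vz^{(i)}_{k_i}$ is supported in $B^{(i)}_{k_i+j_i}$ up to dilation, so $\rho_i(x_i-y_i)<b_i^{k_i}$ together with the integration variable living on scale $b_i^{k_i+j_i}$ controls all the relevant distances via \eqref{e2.2}.

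The core dichotomy is: (a) the \emph{far} regime $b_i^{-j_i}\rho_i(x_i)\gg b_i^{k_i}$, where one exploits the vanishing moments of $\vz^{(i)}\in\cs_0$ (more precisely its $(N_i+s_i+1)$-normalized bump structure) against the smoothness of $K$; and (b) the \emph{near/small} regime $b_i^{-j_i}\rho_i(x_i)\ls b_i^{k_i}$, where one instead uses the size bound on $K$ coming from (K2) together with the normalization of the bump. In regime (a): write $\partial^{\az_1}_1\partial^{\az_2}_2 K_{k_1,k_2}$ as $K$ convolved against $\partial^{\az}\vz_{k}$, expand $\vz^{(i)}_{k_i}$ using its cancellation to gain a factor like $(b_i^{k_i+j_i}/\rho_i(x_i))^{\ez_i}$ from the Hölder-type smoothness of $K$ provided by $(K1')$/$(K1'')$/$(K3')$ (this is where the order $(s_1+1,s_2+1)$ is used: one extra derivative yields a genuine difference estimate, hence the $\ez_i$-gain), while the leading $[\rho_i(x_i)]^{-1}$ from (K1)/(K3) combines with the normalization $b_i^{-k_i}$ of $\vz^{(i)}_{k_i}$ and the measure $b_i^{k_i}$ of its support to produce exactly $b_i^{k_i\ez_i}[\rho_i(x_i)]^{-1-\ez_i}\sim b_i^{k_i\ez_i}/[b_i^{k_i}+b_i^{-j_i}\rho_i(x_i)]^{1+\ez_i}$ after absorbing the harmless powers of $b_i^{j_i}$. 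In regime (b), $b_i^{k_i}+b_i^{-j_i}\rho_i(x_i)\sim b_i^{k_i}$, so the target is $b_i^{k_i\ez_i}\cdot b_i^{-k_i(1+\ez_i)}=b_i^{-k_i}$, which is just the trivial size estimate: $|K_{k_1,k_2}|\ls b_i^{-k_i}$ from pairing $K$ with the $L^1$-normalized bump via (K2), and the $\az_i$-derivatives only cost bounded powers of $\|A_i\|$ because everything is measured after the dilation $A_i^{\ell_i}$.

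Two technical points need care. One is that in regime (a) a single application of the difference estimate is not enough when $\rho_i(x_i)$ is only moderately larger than $b_i^{k_i}$: I would instead iterate the Taylor/mean-value step $s_i+1$ times, or equivalently use the full $(N_i+s_i+1)$-fold cancellation of $\vz^{(i)}$, peeling off one unit of $\ez_i$ per step until the geometric series in the number of scales separating $k_i$ and $\ell_i$ converges — this is precisely the anisotropic analogue of the argument behind \cite[Lemma 1]{hy05}, with $|h|\le[\rho(h)]^{\zeta}$-type comparisons (as in \cite[(3.3)]{b1} and Remark \ref{r1.1}) used to pass between the Euclidean norm appearing after differentiation and the homogeneous quasi-norm appearing in the kernel bounds. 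The second is the bookkeeping of the conjugations by $A_i^{\ell_i}$ and $A_i^{k_i}$: one must verify that $\partial^{\az_i}_i[K_{k_1,k_2}(A_i^{\ell_i}\cdot)](A_i^{-\ell_i}y_i)$ really does reduce, after a change of variables in the convolution, to $K$ conjugated by $A_i^{\ell_i}$ differenced/differentiated and tested against $\vz^{(i)}$ conjugated by $A_i^{k_i-\ell_i}$, at which point the hypotheses of Lemma \ref{l3.2} apply verbatim. The main obstacle is the iteration in regime (a): controlling the sum over the intermediate scales uniformly, making sure each peeled derivative hits either a normalized bump (harmless) or the kernel (gaining $\ez_i$), and checking that the final geometric series has ratio $<1$ thanks to $\ez_i<\log_{b_i}\lz_{i,-}$ — this is where all the regularity of order $(s_1+1,s_2+1)$ is genuinely consumed.
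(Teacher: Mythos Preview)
Your overall architecture is right and matches the paper: split into four cases according to whether $\ell_i\le k_i+j_i+4\sz_i$ (``near'') or $\ell_i>k_i+j_i+4\sz_i$ (``far'') for each $i$; in the near--near case pair $K$ with a rescaled bump via (K2) to get $b_1^{-k_1}b_2^{-k_2}$; in the far variable(s) use the mean-zero of $\vz^{(i)}$ to insert a single difference and invoke $(K1')$, $(K1'')$ or $(K3')$ from Lemma~\ref{l3.2}. The chain-rule bookkeeping you flag is handled exactly as you suggest: after the change of variables, $\partial^{\az_i}$ acting on $\vz^{(i)}(A_i^{\ell_i-k_i}\cdot)$ produces a sum $\sum_{|\bz_i|=|\az_i|}a^{(i)}_{\bz_i}\partial^{\bz_i}[\vz^{(i)}(A_i^{j_i}\cdot)]$ with $|a^{(i)}_{\bz_i}|\ls 1$ in the near regime, and one then checks the resulting function is an $N_i$-normalized bump so that (K2)/(K3) applies.

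Your ``main obstacle'' is a phantom, however. In the far regime a \emph{single} application of the difference estimate is enough: since $\supp\vz^{(i)}_{k_i}\subset B^{(i)}_{k_i+j_i}$ and $\int\vz^{(i)}=0$, one writes the convolution as $\int\vz^{(i)}_{k_i}(z_i)\Delta_{-z_i}[\cdots]\,dz_i$ and Lemma~\ref{l3.2} gives directly $[\rho_i(z_i)]^{\ez_i}/[\rho_i(x_i)]^{1+\ez_i}\ls b_i^{(k_i+j_i)\ez_i}/b_i^{\ell_i(1+\ez_i)}$, which is already the target bound in that regime. There is no iteration, no sum over intermediate scales, and no geometric series whose ratio must be checked against $\ez_i<\log_{b_i}\lz_{i,-}$. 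You are conflating two different roles: the $(N_i+s_i+1)$-normalization of $\vz^{(i)}$ is \emph{not} a higher-order cancellation condition --- it merely guarantees that after hitting $\vz^{(i)}(A_i^{j_i}\cdot)$ with $\partial^{\bz_i}$ for $|\bz_i|\le s_i$ one still has an $N_i$-normalized bump to feed into (K2) and (K3). The only cancellation used is $\vz^{(i)}\in\cs_0$, i.e.\ mean zero. Drop the iteration paragraph and the proof goes through cleanly.
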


\begin{proof}
To prove this lemma, we first present two basic facts. Let
$i=1,\,2$. For any $\az_i\in\zz_+^{n_i}$, by (3.13) in \cite{bh}
when $\ell_i-k_i<0$ or a similar proof when $\ell_i-k_i\ge 0$, for
all $x_i,\, z_i\in\rr^{n_i}$, we have
\begin{eqnarray}\label{e3.1}
\quad\partial^{\az_i}[\vz^{(i)}(A_i^{\ell_i-k_i}\cdot-
A_i^{-k_i}z_i)](A_i^{-\ell_i}y_i)&&=
\partial^{\az_i}[\vz^{(i)}(A_i^{\ell_i-k_i}\cdot)](A_i^{-\ell_i}(y_i-z_i))\\
&&=\sum_{|\bz_i|=|\az_i|}a^{(i)}_{\bz_i}
\partial^{\bz_i}[\vz^{(i)}(A_i^{j_i}\cdot)](A_i^{-j_i-k_i}(y_i-z_i)),\nonumber
\end{eqnarray}
where
\begin{eqnarray}\label{e3.2}
|a^{(i)}_{\bz_i}|\ls b_i^{(\ell_i-j_i-k_i)|\bz_i|\zeta_{i,\,-}}\hs
{\rm if} \hs \ell_i-j_i-k_i\le 0,
\end{eqnarray}
and
\begin{eqnarray}\label{e3.3}
|a^{(i)}_{\bz_i}|\ls b_i^{(\ell_i-j_i-k_i)|\bz_i|\zeta_{i,\,+}}\hs
{\rm if} \hs\ell_i-j_i-k_i>0.
\end{eqnarray}
%
Moreover,  for any fixed  $x_i\in\rr^{n_i}$ with
$\rho_i(x_i)=b_i^{\ell_i}$, if $\ell_i\le k_i+j_i+4\sz_i$, we claim
that

\begin{eqnarray}\label{e3.4}
\xi^{(i)}_{\bz_i}(z_i)\equiv
\partial^{\bz_i}[\vz^{(i)}(A_i^{j_i}\cdot)](A_i^{-j_i-k_i}y_i-A_i^{6\sz_i+1}z_i)
\end{eqnarray}
is an $N_i$-normalized bump function associated to $B^{(i)}_0$.
Indeed, if $\xi^{(i)}_{\bz_i}(z_i)\ne 0$, then by
$\supp(\partial^{\bz_i}\vz^{(i)}) \subset B^{(i)}_{j_i}$, $x_i\in
B^{(i)}_{\ell_i+1},\, y_i\in x_i+B_{k_i+1}^{(i)},\, \ell_i \le
k_i+j_i+4\sz_i$ and \eqref{e2.2}, we obtain
\begin{eqnarray*}
z_i\in A_i^{-k_i-j_i-6\sz_i-1}y_i+B^{(i)}_{-6\sz_i-1}\subset
B^{(i)}_{-\sz_i}+B^{(i)}_{-6\sz_i-1}\subset B^{(i)}_0.
\end{eqnarray*}
Moreover, since $\vz^{(i)}$ is an $(s_i+N_i+1)$-normalized bump
function associated to $B^{(i)}_{j_i}$, then for all
$z_i\in\rr^{n_i}$ and $\gz_i\in\zz_+^{n_i}$ with $|\gz_i|\le N_i$,
we have $ |\partial^{\gz_i}(\xi^{(i)}_{\bz_i})(z_i)|\ls 1. $
Thus, the above claim holds.

We now show Lemma \ref{l3.2A} by considering the following four cases. In the
following {\it Case (i)} through {\it Case (iv)}, we always assume that
$\rho_i(x_i)=b_i^{\ell_i}$ for certain $\ell_i\in\zz$ and
$\az_i\in\zz_+^{n_i}$ with $|\az_i|\le s_i$, $i=1,\,2$.

{\it Case  (i).} $\ell_1\le k_1+j_1+4\sz_1$ and $\ell_2\le
k_2+j_2+4\sz_2$. In this case, by \eqref{e3.1}, \eqref{e3.2}, \eqref{e3.3},
\eqref{e3.4}, (K2), $|\az_i|\le s_i$,
$\zeta_{i,\,+}=\log_{b_i}\lz_{i,\,+}<1$ and $j_i\ge 0$, we have
\begin{eqnarray*}
&&|\partial^{\az_1}_1\partial^{\az_2}_2
[K_{k_1,\,k_2}(A_1^{\ell_1}\cdot,\,A_2^{\ell_2}\cdot )](
A_1^{-\ell_1}y_1,\,A_2^{-\ell_2}y_2)|\\
&&\hs=\lf|b_1^{-k_1}b_2^{-k_2}\sum_{\gfz{|\bz_1|\le |\az_1|}
{|\bz_2|\le |\az_2|}}a^{(1)}_{\bz_1}a^{(2)}_{\bz_2}\lf\langle K,\,
 \bigotimes_{i=1}^2\xi^{(i)}_{\bz_i}
(A_i^{-j_i-k_i-6\sz_i-1}\cdot)\r\rangle\r|\ls b_1^{-k_1}b_2^{-k_2},
\end{eqnarray*}
which is desired. Here
\begin{eqnarray*}
&&\bigotimes_{i=1}^2 \xi^{(i)}_{\bz_i}
(A_i^{-j_i-k_i-6\sz_i-1}\cdot)\equiv \xi^{(1)}_{\bz_1}
(A_1^{-j_1-k_1-6\sz_1-1}\cdot)\bigotimes\xi^{(2)}_{\bz_2}
(A_2^{-j_2-k_2-6\sz_2-1}\cdot).
\end{eqnarray*}

{\it Case   (ii).} $\ell_1\le k_1+j_1+4\sz_1$ and $\ell_2>
k_2+j_2+4\sz_2$. In this case, if $z_2\in B^{(2)}_{k_2+j_2}$,
$\rho_2(x_2-y_2)<b_2^{k_2}$ and $x_2\in B^{(2)}_{\ell_2+1}\setminus
B^{(2)}_{\ell_2}$ with $\ell_2> k_2+j_2+4\sz_2$, then by Definition
\ref{d2.1}, it is easy to obtain that $\rho_2(y_2)\ge
b^{\ell_2-\sz_2}$ and $\rho_2(z_2)< b_2^{-3\sz_2}\rho_2(y_2)$. Thus, by
$\vz^{(2)}\in\cs_0(\rrm)$, \eqref{e3.1}, \eqref{e3.2}, \eqref{e3.3}
and \eqref{e3.4} with $i=1$, $\supp\vz^{(2)}(A_2^{-k_2}\cdot)\subset
B^{(2)}_{j_2+k_2}$ and (K$3'$),  we have
\begin{eqnarray*}
&&|\partial^{\az_1}_1\partial^{\az_2}_2
[K_{k_1,\,k_2}(A_1^{\ell_1}\cdot,\,A_2^{\ell_2}\cdot )](
A_1^{-\ell_1}y_1,\,A_2^{-\ell_2}y_2)|\\
&&\ =\Bigg|b_1^{-k_1}\sum_{|\bz_1|\le
|\az_1|}a^{(1)}_{\bz_1}\int_{\rrm}
\vz^{(2)}_{k_2}(z_2)\Delta^{(2)}_{-A_2^{\ell_2}z_2}
\partial^{\az_2}_2[K^{ \xi^{(1)}_{\bz_1} ,\,-j_1-k_1-6\sz_1-1}(A_2^{\ell_2}\cdot)]
(A_2^{-\ell_2}y_2)\, dz_2\Bigg|\\
&&\ \ls b_1^{-k_1}\int_{B^{(2)}_{j_2+k_2}}
\frac{[\rho_2(z_2)]^{\ez_2}}
{[\rho_2(x_2)]^{1+\ez_2}}|\vz^{(2)}_{k_2}(z_2)|\, dz_2
\ls b_1^{-k_1}b_2^{j_2+(j_2+k_2)\ez_2-\ell_2(1+\ez_2)},
\end{eqnarray*}
which is desired.

{\it Case  (iii). } $\ell_1> k_1+j_1+4\sz_1$ and $\ell_2\le
k_2+j_2+4\sz_2$. In this case, by symmetry, similarly to the estimate of {\it Case
(ii)}, we also have
$$|\partial^{\az_1}_1\partial^{\az_2}_2
[K_{k_1,\,k_2}(A_1^{\ell_1}\cdot,\,A_2^{\ell_2}\cdot )](
A_1^{-\ell_1}y_1,\,A_2^{-\ell_2}y_2)|\ls
b_1^{j_1+(j_1+k_1)\ez_1-\ell_1(1+\ez_1)}b_2^{-k_2}.$$

{\it Case  (iv). } $\ell_1> k_1+j_1+4\sz_1$ and $\ell_2>k_2+j_2+4\sz_2$.
In this case, for $i=1,\,2$, $z_i\in B^{(i)}_{j_i+k_i}$, $\rho_i(x_i-y_i)<
b^{k_i}$ and $\rho_i(x_i)=b_i^{\ell_i}$,  we have $\rho_i(y_i)\ge
b_i^{\ell_i-\sz_i}$ and $\rho_i(z_i)< b_i^{-3\sz_i}\rho_i(y_i)$. By
this, $\vz^{(i)}\in\cs_0(\rr^{n_i})$,
$\supp\vz^{(i)}(A_i^{-k_i}\cdot)\subset B^{(i)}_{j_i+k_i}$ and (K$1''$), we obtain
\begin{eqnarray*}
&&|\partial^{\az_1}_1\partial^{\az_2}_2
[K_{k_1,\,k_2}(A_1^{\ell_1}\cdot,\,A_2^{\ell_2}\cdot )](
A_1^{-\ell_1}y_1,\,A_2^{-\ell_2}y_2)|\\
&&\hs=\Bigg|\int_{\rnm}\vz^{(1)}_{k_1}(z_1)\vz^{(2)}_{k_2}(z_2)\\
&&\hs\hs\times \Delta^{(1)}_{-A_1^{-\ell_1}z_1}
\Delta^{(2)}_{-A_2^{-\ell_2}z_2}\partial^{\az_1}_1
\partial^{\az_2}_2[
K(A_1^{\ell_1}\cdot,\,A_2^{\ell_2}\cdot)](A_1^{-\ell_1}y_1
,\,A_2^{-\ell_2}y_2)\,dz\Bigg|\\
&&\hs\ls\int_{B^{(1)}_{j_1+k_1}\times
B^{(2)}_{j_2+k_2}}|\vz^{(1)}_{k_1}(z_1)\vz^{(2)}_{k_2}(z_2)|
\frac{[\rho_1(z_1)]^{\ez_1}}{[\rho_1(x_1)]^{1+\ez_1}}
\frac{{[\rho_2(z_2)]^{\ez_2}}}
{[\rho_2(x_2)]^{1+\ez_2}}\,dz
\ls \prod_{i=1}^2b_i^{j_i+\ez_i(j_i+k_i)-\ell_i(1+\ez_i)},
\end{eqnarray*}
which is desired.

Combining the above estimates completes the proof of Lemma \ref{l3.2A}.
\end{proof}

\begin{rem}\label{r3.2}
\rm Notice that in the proof of Lemma \ref{l3.2A} we have not
used explicitly the bounds on the highest order derivatives of $K$.
Instead, we used the difference properties (K$1'$), (K$1''$),
and (K$3'$) from Lemma \ref{l3.2}. Thus, if $K$ is merely a kernel
of a PASIO of order 0, then conclusions of Lemma \ref{l3.2A} apply.
In particular, there exists a
positive constant $C$ such that for all $h_i,\,x_i,\,y_i\in\rr^{n_i}$ with
$\rho_i(x_i)=b_i^{\ell_i}$ for certain $\ell_i\in\zz$ and
$\rho_i(x_i-y_i)<b_i^{k_i}$,
\begin{equation}\label{e33}
\ |
K_{k_1,\,k_2}(y_1,\,y_2)|\le
C\prod_{i=1}^2\dfrac{b_i^{k_i\ez_i}}{[b_i^{k_i}
+b_i^{-j_i}\rho_i(x_i)]^{1+\ez_i}}.
\end{equation}
\end{rem}

\begin{proof}[Proof of Theorem \ref{t1.1}]
Let $p\in(1,\,\fz)$ and $w\in\wwp$.
Let $T$ be a product anisotropic singular integral
operator (PASIO) of order $0$ with kernel $K$
as in Definition \ref{d1.3A}. Let $N_1$ and $N_2$  be
as in (K2) and (K3). Let
$\psi\equiv\psi^{(1)} \otimes \psi^{(2)}$ and
$\phi\equiv\phi^{(1)} \otimes \phi^{(2)}$ be
as in Proposition \ref{p2.1}. By part (iv) of this proposition, we have
$\psi=\phi\ast\phi$. From Proposition \ref{p2.2},
it follows that for all $f\in\cd(\rnm)$,
\begin{eqnarray}\label{e3.7}
\|\vec S_{\phi\ast\phi}(f)\|_{L^p_w(\rnm)}\sim\|f\|_{L^p_w(\rnm)}.
\end{eqnarray}

Since $\phi^{(i)}\in\cs_0(\rr^{n_i})$ for $i=1,\,2$, then by
Lemma \ref{l3.1}, we have
$\phi^{(i)}=\sum_{j_i=0}^\fz b_i^{-4j_i}\phi^{(i,\,j_i)},$ where
$\phi^{(i,\,j_i)}\in\cs_0(\rr^{n_i})$ is a constant multiple of an
$(N_1+1)$-normalized bump function associated to $B^{(i)}_{j_i}$.
For $j_1,\, j_2\in\zz_+$ and
$k_1,\,k_2\in\zz$, let
$\phi^{\{j_1,\,j_2\}}\equiv\phi^{(1,\,j_1)}\otimes\phi^{(2,\,j_2)}$ and
$K_{k_1,\,k_2}^{j_1,\,j_2}\equiv
K\ast\phi^{\{j_1,\,j_2\}}_{k_1,\,k_2}$. For any $x,\,z\in\rnm$,
$k_1,\,k_2\in\zz,\,j_1,\,j_2\in\zz_+$, $y\in\rnm$ with
$\rho_1(y_1)<b_1^{k_1}$ and $\rho_2(y_2)<b_2^{k_2}$, and
locally integrable function $f$ on $\rnm$, by the estimate
\eqref{e33} in Remark \ref{r3.2}
and $b_i^{k_i}+b_i^{-j_i}\rho_i(z_i)\sim b_i^{k_i}+b_i^{-j_i}\rho_i(z_i-y_i)$,
$i=1,\,2$, we obtain
\begin{eqnarray}\label{e3.8}
&|f\ast K^{j_1,\,j_2}_{k_1,\,k_2}(x-y)|&\ls
\int_{\rnm}|f(x-y-z)|\prod_{i=1}^2
\frac{b_i^{k_i\ez_i}}{[b_i^{k_i}+b_i^{-j_i}\rho_i(z_i)]^{1+\ez_i}}\,dz\\
&&\ls
\int_{\rnm}|f(x-z)|\prod_{i=1}^2
\frac{b_i^{k_i\ez_i}}{[b_i^{k_i}+b_i^{-j_i}\rho_i(z_i)]^{1+\ez_i}}\,dz \nonumber\\
&&\ls
b_1^{j_1(1+\ez_1)}b_2^{j_2(1+\ez_2)}\cm_s(f)(x),\nonumber
\end{eqnarray}
where and in what follows,  $\cm_s(f)$ denotes the {\it strong
maximal function} which is defined by
setting, for all $x\in\rnm$,
$$\cm_s(f)(x)\equiv\sup_{k_1,\,k_2\in\zz}\sup_{x\in y+B^{(1)}_{k_1}
\times B^{(2)}_{k_2}} \frac{1}{b_1^{k_1}b_2^{k_2}}
\int_{y+B^{(1)}_{k_1}\times B^{(2)}_{k_2}}|f(z)|\,dz.$$
Thus, by \eqref{e3.7}, \eqref{e3.8}, the weighted
vector-valued maximal inequality for $\cm_s$ (see \cite[Proposition
2.2]{blyz1}), and the $L^p_w(\rnm)$-boundedness of $\vec
g_{\phi}$ which was proved in the proof of
\cite[Theorem 3.2]{blyz1}, we have that for $f\in \cd(\rnm)$,
\begin{eqnarray*}
&&\|Tf\|_{L^p_w(\rnm)}\\
&&\hs\ls \sum_{j_1=0}^\fz\sum_{j_2=0}^\fz b_1^{-4j_1}
b_2^{-4j_2}\\
&&\hs\hs\times\lf\|\lf\{\sum_{k_1,\,k_2\in\zz}\frac1{b_1^{k_1}b_2^{k_2}}
\int_{B^{(1)}_{k_1}\times
B^{(2)}_{k_2}}|K^{j_1,\,j_2}_{k_1,\,k_2}\ast
f\ast\phi_{k_1,\,k_2}(\cdot-y)|^2\, dy\r\}^{1/2}
\r\|_{L^p_w(\rnm)}\\
&&\hs\ls\sum_{j_1=0}^\fz\sum_{j_2=0}^\fz b_1^{-2j_1 }
b_2^{-2j_2}\lf\|\lf\{\sum_{k_1,\,k_2\in\zz}|
\cm_s\lf(f\ast\phi_{k_1,\,k_2}\r)|^2\r\}^{1/2}\r\|_{L^p_w(\rnm)}\\
&&\hs\ls\sum_{j_1=0}^\fz\sum_{j_2=0}^\fz b_1^{-2j_1} b_2^{-2j_2
}\lf\|\vec g_\phi(f)\r\|_{L^p_w}\ls
\|f\|_{L^p_w(\rnm)}.
\end{eqnarray*}
This combined with the density of $\cd(\rnm)$ in $L^p_w(\rnm)$ then
completes the proof of Theorem \ref{t1.1}.
\end{proof}

\section{Proof of Theorem
\ref{t1.2}}\label{s4}

To prove Theorem \ref{t1.2}, we need to use a vector-valued variant
of the boundedness criterion established in
\cite[Corollary 6.5]{blyz1}. We shall use an analogue of the
grid of Euclidean dyadic cubes which is mainly due to Christ \cite{c90}
and formulated as in \cite[Lemma 2.2]{blyz1}.

\begin{lem}\label{l3.3} Let $A$ be a dilation. There exists a
collection
$\cq\equiv\{Q^k_\az\subset \rn:\ k\in\zz,\ \az\in
\mathrm I_k\}$ of open subsets, where $\mathrm I_k$ is certain index set, such that

(i) $|\rn\setminus\cup_\az Q^k_\az|=0$ for each fixed $k$ and
$Q^k_\az\cap Q^k_\bz=\emptyset$ if $\az\not=\bz$;

(ii) for any $\az,\, \bz,\, k,\, \ell$ with $\ell\ge k$, either
$Q^k_\az\cap Q^\ell_\bz=\emptyset$ or $Q^\ell_\az\subset Q^k_\bz$;

(iii) for each $(\ell,\, \bz)$ and each $k<\ell$, there exists a
unique $\az$ such that $Q^\ell_\bz\subset Q^k_\az$;

(iv) there exist certain negative integer $v$ and positive integer
$u$ such that for all $Q^k_\az$ with $k\in\zz$ and $\az\in \mathrm I_k$,
there exists $x_{Q^k_\az}\in Q^k_\az$ satisfying that for all $x\in
Q^k_\az$,
$x_{Q^k_\az}+B_{vk-u}\subset Q^k_\az\subset x+B_{vk+u}.$
\end{lem}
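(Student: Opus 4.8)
This is the anisotropic instance of the dyadic-grid construction of Christ \cite{c90}; the plan is to realize $\rn$, equipped with the step quasi-norm $\rho$ and Lebesgue measure, as a space of homogeneous type and then run Christ's construction, checking that the ellipsoid structure pins down the integers $v$ and $u$. The one structural observation needed is that, by the facts recorded in Section \ref{s2}, $\rho(x-y)$ is a quasi-metric on $\rn$ with constant $b^\sz$, Lebesgue measure is doubling with $|B_{k+1}|=b|B_k|$, and, straight from Definition \ref{d2.1}, the $\rho$-ball $\{y\in\rn:\ \rho(x-y)<b^k\}$ is precisely the translated dilated ball $x+B_k$. In particular the admissible radii are quantized to $\{b^k:k\in\zz\}$, and it is exactly this quantization that turns the comparability constants of Christ's construction into the single pair of integers $(v,u)$ demanded in (iv).

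First I would fix a negative integer $v$ with $|v|$ large enough (depending only on $b$ and $\sz$) that $b^v$ is small compared with the powers of the quasi-triangle constant $b^\sz$ appearing below. For each $k\in\zz$, using Zorn's lemma, pick a set $\{x^k_\az\}_{\az\in \mathrm I_k}\subset\rn$ maximal with respect to the separation property $\rho(x^k_\az-x^k_\bz)\ge b^{vk}$ for $\az\ne\bz$; such a set is locally finite, so $\mathrm I_k$ may be taken countable, and by maximality the balls $x^k_\az+B_{vk}$, $\az\in \mathrm I_k$, cover $\rn$. By a standard inductive argument one arranges the nets to be nested, $\{x^k_\az\}_\az\subset\{x^{k+1}_\bz\}_\bz$ for every $k$. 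After fixing a well-ordering of each $\mathrm I_k$, define the \emph{parent map} $\pi_k\colon\mathrm I_{k+1}\to\mathrm I_k$ by letting $\pi_k(\bz)$ be the index $\az$ minimizing $\rho(x^{k+1}_\bz-x^k_\az)$, ties broken by the well-ordering; density of the level-$k$ net gives $\rho(x^{k+1}_\bz-x^k_{\pi_k(\bz)})<b^{vk}$. For $k<\ell$ write $\pi^\ell_k\equiv\pi_k\circ\cdots\circ\pi_{\ell-1}\colon\mathrm I_\ell\to\mathrm I_k$ for the composed ancestor map.

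Next, for $k\in\zz$ and $\az\in \mathrm I_k$ I would set $E^k_\az\equiv\bigcup_{\ell\ge k}\ \bigcup_{\gz\in \mathrm I_\ell,\ \pi^\ell_k(\gz)=\az}(x^\ell_\gz+B_{v\ell-u})$ for a positive integer $u$ to be fixed, and put $Q^k_\az\equiv(\overline{E^k_\az})^{\circ}$. The $\ell=k$ term of $E^k_\az$ is $x^k_\az+B_{vk-u}$, so the inner ball condition of (iv) holds with $x_{Q^k_\az}=x^k_\az$. The tree structure of the maps $\pi^\ell_k$, together with the nearest-parent rule and $u$ chosen large, forces the $Q^k_\az$ to be pairwise disjoint for fixed $k$ and nested across levels as in (ii), and uniqueness of the ancestor $\pi^\ell_k(\gz)$ then gives (iii). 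For the outer ball bound in (iv): if $x\in Q^k_\az$, chaining the quasi-triangle inequality along the sequence of parents from a deep descendant center down to $x^k_\az$ and summing the geometric series $\sum_{\ell\ge k}b^{v\ell}$ against the accumulated factors $b^\sz$ shows $\rho(x-x^k_\az)\ls b^{vk}$, whence $Q^k_\az\subset x+B_{vk+u}$ after enlarging $u$. Finally, for each $k$ the $E^k_\az$ cover $\rn$ (the level-$k$ net is $b^{vk}$-dense), so $\rn\setminus\bigcup_\az Q^k_\az$ lies in $\bigcup_\az\partial Q^k_\az$, which is Lebesgue-null by the inner/outer ball sandwich in (iv) and doubling; this yields (i) with the $Q^k_\az$ open.

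The step I expect to be the real obstacle is the diameter bound feeding (iv) — equivalently, the pairwise disjointness and covering of the $E^k_\az$ — because the quasi-triangle inequality for $\rho$ costs a factor $b^\sz>1$ at every step, so the telescoping estimate of $\rho(x-x^k_\az)$ in terms of the parent-distances $\rho(x^{\ell+1}_\gz-x^\ell_{\pi_\ell(\gz)})\le b^{v\ell}$ cannot be summed naively; the construction only closes once $v$ has been taken so negative that $b^v$ dominates these losses, and that same choice of $v$ is what gives the small-boundary property needed for (i). This is already the delicate point in the metric-space setting, where it is carried out in detail; once the parameters are chosen, the combinatorics producing (i)--(iii) and the nestedness are formally identical to \cite{c90} and to the statement in \cite[Lemma 2.2]{blyz1}, to which I would refer for the remaining routine verifications.
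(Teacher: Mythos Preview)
The paper does not give its own proof of this lemma; it merely states the result with attribution to Christ \cite{c90} and to the formulation in \cite[Lemma 2.2]{blyz1}. Your proposal goes further than the paper by actually sketching the construction, and the outline you give --- realize $(\rn,\rho,dx)$ as a space of homogeneous type, build maximal $b^{vk}$-separated nets, define parent maps, assemble the cubes from descendant balls, and close the estimates by taking $v$ negative enough --- is precisely Christ's argument specialized to the step quasi-norm. So you and the paper are in complete agreement on approach; you have simply unpacked the citation.

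One small caveat worth noting: your one-line justification that $\bigcup_\az\partial Q^k_\az$ is Lebesgue-null ``by the inner/outer ball sandwich in (iv) and doubling'' understates what is needed. The small-boundary property in Christ's theorem is not a consequence of doubling and the containments in (iv) alone; it requires an iterated estimate showing that the measure of a thin $\rho$-neighborhood of $\partial Q^k_\az$ decays geometrically, and this is exactly where the careful choice of $v$ versus the quasi-triangle constant $b^\sz$ enters. You do correctly flag this region of the argument as the real obstacle in your final paragraph and defer to \cite{c90} for the details, which is what the paper itself does, so this is not a gap in your proposal so much as a slightly optimistic summary midway through.
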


In what follows, for convenience, we call $\{Q^k_\az\}_{k\in\zz,\,
\az\in \mathrm I_k}$ dyadic cubes. Also for any dyadic cube
$Q^k_\az$ with $k\in\zz$ and $\az\in \mathrm I_k$, we always set
$\ell(Q^k_\az)\equiv k$ as its {\it level}.

Let $A_i$ be a dilation on $\rr^{n_i}$, and $\cq^{(i)}$,
$\ell(Q_i),\,v_i,\,u_i$ the same as in Lemma \ref{l3.3}
corresponding to $A_i$ for $i=1,\,2$. Let $\hr\equiv\cq^{(1)}\times
\cq^{(2)}$. For $R\in\hr$, we always write $R\equiv R_1\times R_2$ with
$R_i\in\cq^{(i)}$ and call $R$ a dyadic rectangle. We need the notion
of rectangular atoms for anisotropic product Hardy
spaces.

\begin{defn}\label{d3.1}\rm
Let $w\in \wfz$ and $q_w$ be as in \eqref{e2.4}. The triplet $(p,\
q,\ \vec s)_w$ is called {\it admissible} if $p\in(0,\, 1]$,
$q\in [2,\, \fz)\cap(q_w,\, \fz)$ and $\vec s\equiv (s_1,\,s_2)$
with $s_i\in\zz_+$ and $s_i\ge\lfloor
(\frac{q_w}p-1)\zeta_{i,-}^{-1}\rfloor$, $i=1,\ 2$. For any
$R\in\hr$, a function $a_R$ is called a {\it rectangular $(p,\,q,\,\vec
s)_w$-atom} if

(i) $a_R$ is supported on $R''=R''_1\times R''_2$, where
$R''_i\equiv x_{R_i}+B^{(i)}_{v_i(\ell(R_i)-1)+u_i+3\sz_i},\ i=1,\
2$;

(ii) $\int_\rn a_R(x_1,\ x_2)x_1^\az\,dx_1=0$ for all $|\az|\le s_1$
and almost all $x_2\in \rrm$, and

\quad\ \  $\int_\rrm a_R(x_1,\, x_2)x_2^\bz\,dx_2=0$ for all
$|\bz|\le s_2$ and almost all $ x_1\in \rn$;

(iii) $\|a\|_{L^q_w(\rnm)}\le [w(R)]^{1/q-1/p}$.
\end{defn}

We also need to consider the vector-valued space
\begin{eqnarray*}
&&\ch\equiv \{\{f_{k_1,\,k_2}\}_{k_1,\,k_2\in\zz}: \,f_{k_1,\,k_2} \
{\rm is\ a\
measurable\ function\ on } \ B^{(1)}_{k_1}\times B^{(2)}_{k_2}\\
&&\hs \hspace{6cm} {\rm \ for \ any} \ k_1,\,k_2\in\zz\ {\rm and}\
|\{f_{k_1,\,k_2}\}_{k_1,\,k_2\in\zz}|_{\ch}<\fz\},
\end{eqnarray*}
where
\begin{eqnarray*}
|\{f_{k_1,\,k_2}\}_{k_1,\,k_2\in\zz}|_{\ch}\equiv
\bigg\{\sum_{k_1,\,k_2\in\zz}b^{-k_1}_1b_2^{-k_2}
\int_{B^{(1)}_{k_1}\times B^{(2)}_{k_2}} |f_{k_1,\,k_2}(y)|^2\, dy
\bigg\}^{1/2}.
\end{eqnarray*}
In what follows, for $x\in\rnm$, we always write
\begin{eqnarray*}
|\{f_{k_1,\,k_2}(x)\}_{k_1,\,k_2\in\zz}|_{\ch}\equiv
\bigg\{\sum_{k_1,\,k_2\in\zz}b^{-k_1}_1b_2^{-k_2}
\int_{B^{(1)}_{k_1}\times B^{(2)}_{k_2}} |f_{k_1,\,k_2}(x-y)|^2\, dy
\bigg\}^{1/2}.
\end{eqnarray*}

Finally, let $p\in(0,\,\fz)$ and $w\in\wfz$.
Define the vector-valued space $L^p_{w,\,\ch}(\rnm)$
as the collection of all sequences $\{f_{k_1,\,k_2}
\}_{k_1,\,k_2\in\zz}$ of measurable functions on $\rnm$ with the norm
\begin{eqnarray*}
\|\{f_{k_1,\,k_2}\}_{k_1,\,k_2\in\zz}\|_{L^p_{w,\,\ch}(\rnm)}
\equiv\lf\{\int_\rnm\lf|\{f_{k_1,\,k_2}(x)\}_{k_1,\,k_2\in\zz}\r|^p_\ch
w(x)\,dx\r\}^{1/p}<\infty.
\end{eqnarray*}

The following conclusion is the vector-valued variant of
 \cite[Corollary 6.5]{blyz1}, whose proof is similar to that of
\cite[Corollary 6.1]{blyz1}; see also \cite[Corollary 1.1]{cyz}
for the corresponding result on $H^p(\rnm)$. Here we omit the details.

\begin{lem}\label{l3.4}
Let $(p,\,q_1,\,\vec s)_w$ be an
admissible triplet as in Definition \ref{d3.1}. Let $q_0\in[q_1,\,
\fz)$ and $\{T_{k_1,\,k_2}\}_{k_1,\,k_2\in\zz}$ be an $\ch$-valued
linear operator bounded from $L^{q_1}_w(\rnm)$ to
$L^{q_0}_{w,\,\ch}(\rnm)$. Let $q\in [p,\, 2)$ be such that
$1/q-1/p=1/q_0-1/q_1$.

Suppose that there exist positive constants $C,\, \ez$
such that for all $\gz\in\zz_+$ and all rectangular $(p,\,
q_1,\,\vec s)_w$-atoms $a_R$,
\begin{eqnarray*}
\int_{(R_{1,\gz}\times
R_{2,\,\gz})^\complement}|\{T_{k_1,\,k_2}(a_R)(x)
\}_{k_1,\,k_2\in\zz}|^q_\ch w(x)\,dx\le C \max \{ b_1^{-\gz\ez},
b_2^{-\gz\ez}\},
\end{eqnarray*}
where $R_{i,\gz}\equiv
x_{R_i}+B^{(i)}_{v_i(\ell(R_i)-1)+u_i+5\sz_i+\gz},\,i=1,\,2.$ Then
$\{T_{k_1,\,k_2}\}_{k_1,\,k_2\in\zz}$ uniquely extends to a bounded
linear operator from $\hp$ to $L^q_{w,\,\ch}(\rnm)$.
\end{lem}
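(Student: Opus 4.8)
The plan is to reduce the claimed $H^p\to L^q_{w,\,\ch}$ boundedness to an atomic estimate, exactly in the spirit of \cite[Corollary 6.1]{blyz1} and \cite[Corollary 1.1]{cyz}. First I would recall that, by the Lusin-area characterization of $\hp$ (Definition \ref{d2.6}) together with the weighted atomic decomposition of anisotropic product Hardy spaces from \cite{blyz1}, every $f\in\hp$ can be written as $f=\sum_i \lz_i a_i$, where each $a_i$ is a (global) $(p,\,q_1,\,\vec s)_w$-atom, with $\sum_i|\lz_i|^p\ls\|f\|_\hp^p$, and each such atom is in turn a suitable sum of rectangular $(p,\,q_1,\,\vec s)_w$-atoms adapted to an open set of finite measure. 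The point of working with the vector-valued operator $\{T_{k_1,\,k_2}\}$ landing in $L^{q_0}_{w,\,\ch}$ is that its "target" norm behaves like an $L^{q_0}_w$ norm, so the whole machinery of \cite[Section 6]{blyz1} applies verbatim once we record the single building-block estimate over the complement of the dilated support $R_{1,\gz}\times R_{2,\gz}$.

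Next I would carry out the argument in the standard order. Step one: fix a global atom $a$ supported in an open set $\Oz$ with $w(\Oz)<\fz$, decompose it into rectangular atoms $a=\sum_R a_R$ via the Journé-type covering lemma (available in the anisotropic product setting from \cite{blyz1}), and split $\rnm$ into the "good part" $\bigcup_R (R_{1,\gz}\times R_{2,\gz})$ and its complement, summed over $\gz\in\zz_+$. Step two: on the good part, one uses the $L^{q_1}_w\to L^{q_0}_{w,\,\ch}$ boundedness of $\{T_{k_1,\,k_2}\}$ together with Hölder's inequality (with the exponent relation $1/q-1/p=1/q_0-1/q_1$) and the weighted size estimate $\|a_R\|_{L^{q_1}_w}\le[w(R)]^{1/q_1-1/p}$ from Definition \ref{d3.1}(iii), exactly as in \cite[p.\,6]{blyz1}; this part is "soft" and produces a bound by $\|f\|_\hp^p$ after summing the geometric series in $\gz$. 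Step three: on the complement, the hypothesis
\[
\int_{(R_{1,\gz}\times R_{2,\,\gz})^\complement}
|\{T_{k_1,\,k_2}(a_R)(x)\}_{k_1,\,k_2\in\zz}|^q_\ch\,w(x)\,dx
\le C\max\{b_1^{-\gz\ez},\,b_2^{-\gz\ez}\}
\]
is applied directly, and the extra decay in $\gz$ is what makes the sum over $\gz$, $R$, and $i$ converge; here one also invokes the weighted $A_\fz$ bound relating $\sum_R w(R)$ to $w(\Oz)$ (the "$w(\Oz)$ controls the sum of rectangle weights up to $\ez$-loss" estimate, \cite[Lemma 6.*]{blyz1}). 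Step four: sum over the global atoms using $\sum_i|\lz_i|^p\ls\|f\|_\hp^p$ and $p\le 1$ (so that $\|\cdot\|^p$ is subadditive), obtaining $\|\{T_{k_1,\,k_2}(f)\}\|_{L^q_{w,\,\ch}}\ls\|f\|_\hp$ on a dense subclass. Step five: a routine density/uniqueness argument (using that $\hp$-atoms span a dense subspace and that two bounded extensions agreeing on atoms agree) upgrades this to the unique bounded extension.

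The main obstacle I anticipate is not any single inequality but making the two-parameter bookkeeping rigorous: in the product setting one cannot simply iterate one-parameter atomic estimates, and the passage from a global atom to rectangular atoms requires the anisotropic Journé covering lemma and the careful control of $\sum_R w(R_{i,\gz})$ by $w(\Oz)$ with only an $\ez$-power loss in the dilation parameter. This is precisely the content that \cite[Corollary 6.1]{blyz1} and \cite[Corollary 1.1]{cyz} already isolate, so here one should invoke those results (or their proofs) rather than redo them; the only genuinely new ingredient is that the range space is the vector-valued $L^{q_0}_{w,\,\ch}(\rnm)$ rather than a scalar $L^{q_0}_w(\rnm)$, and one checks that every step of the scalar argument — Hölder, the $A_\fz$ reverse-type estimate, the geometric summation in $\gz$ — only uses that $|\cdot|_\ch$ is a norm on the fibers, which it manifestly is. Hence the proof reduces to remarking that the cited corollary goes through mutatis mutandis with $\ch$-valued targets, and we omit the repetitive details.
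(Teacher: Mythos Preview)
Your proposal is correct and matches the paper's own treatment: the paper does not give a proof of this lemma either, stating only that it is the vector-valued variant of \cite[Corollary 6.5]{blyz1}, that its proof is similar to that of \cite[Corollary 6.1]{blyz1} (see also \cite[Corollary 1.1]{cyz}), and that the details are omitted. Your sketch of the atomic decomposition, Journ\'e-type covering, good/bad splitting, and the observation that only the norm property of $|\cdot|_\ch$ is used in passing to the vector-valued target is precisely the ``mutatis mutandis'' argument the paper has in mind.
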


We also need the boundedness result for the anisotropic
Littlewood-Paley $g$-function whose proof is similar
to that of the anisotropic
Lusin-area function; see \cite[Theorem 3.2]{blyz1}.
We omit the details.

\begin{lem}\label{l3.5}
Let $\vz\in\cs_0(\rn)$, $p\in(1,\,\fz)$, and $w\in \ca_p(\rn;\,A)$.
Then, the Littlewood-Paley $g$-function, which is given by $g_\vz(f)(x)
\equiv\lf\{\sum_{k\in\zz}|f\ast\vz_k(x)|^2\r\}^{1/2}$, is bounded on $L^p_w(\rn)$.
\end{lem}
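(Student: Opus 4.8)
The plan is to mirror exactly the argument used to prove the anisotropic Lusin-area estimate in \cite[Theorem 3.2]{blyz1}, specializing from the product (two-parameter) situation to the one-parameter situation and from the square-function averaged over dilated balls to the pointwise square function. First I would recall that, by the Calder\'on reproducing formula associated with the pair $\tz,\psi$ from Proposition \ref{p2.1} (in its one-parameter form, which is exactly \cite[Theorem 5.8]{bh}), one has the identity $f=\sum_{j\in\zz}\psi_j\ast\tz_j\ast f$ in an appropriate sense, and the $g$-function $g_\vz(f)$ is pointwise comparable to the discrete square function $\{\sum_{j\in\zz}|\tz_j\ast f|^2\}^{1/2}$ up to almost-orthogonality estimates on the kernels $\vz_k\ast\psi_j$, which decay geometrically in $|k-j|$ because both $\vz$ and $\psi$ have enough vanishing moments (indeed $\vz\in\cs_0(\rn)$ and $\widehat{\psi}$ is supported away from the origin). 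This reduces the $L^p_w$-boundedness of $g_\vz$ to the $L^p_w$-boundedness of the Littlewood--Paley $g$-function built from the ``nice'' functions $\tz,\psi$ furnished by Proposition \ref{p2.1}.

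For that model $g$-function I would run the standard Fefferman--Stein style argument in the weighted anisotropic setting: via the Khinchine inequality, boundedness of $\{\sum_{j}|\tz_j\ast f|^2\}^{1/2}$ on $L^p_w(\rn)$ for $p\in(1,\fz)$ and $w\in\ca_p(\rn;A)$ is equivalent to the $L^p_w$-boundedness, uniform over all choices of signs $\{\ez_j\}_j\subset\{\pm1\}$, of the convolution operator with kernel $\sum_j\ez_j\tz_j\ast\tz_j$ (or, after using $\psi=\phi\ast\phi$ as in part (iv) of Proposition \ref{p2.1}, of the related randomized sums). Each such operator is an anisotropic Calder\'on--Zygmund convolution operator whose kernel satisfies the size and (Lipschitz-type) regularity estimates relative to $\rho$ uniformly in the signs — this is precisely the content used in \cite[Theorem 5.8]{bh} and \cite[Theorem 3.2]{blyz1}. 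The anisotropic weighted Calder\'on--Zygmund theory (the weighted $L^p$ bounds for such operators with $w\in\ca_p(\rn;A)$, from \cite{bh}) then gives the uniform bound, and Khinchine again converts it back to the square-function estimate $\|g_\tz(f)\|_{L^p_w(\rn)}\ls\|f\|_{L^p_w(\rn)}$; the converse inequality, hence the norm equivalence, follows from the reproducing formula by a duality/polarization argument as in \cite[Theorem 3.2]{blyz1}.

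I would then transfer from the model $g$-function back to $g_\vz$ for a general $\vz\in\cs_0(\rn)$ using the almost-orthogonality estimates mentioned above together with a vector-valued maximal inequality for the anisotropic Hardy--Littlewood maximal operator (\cite[Proposition 2.2]{blyz1}, one-parameter version): writing $\vz_k\ast f=\sum_j(\vz_k\ast\psi_j)\ast(\tz_j\ast f)$, the geometric decay in $|k-j|$ of the kernels $\vz_k\ast\psi_j$ (measured against $\rho$) lets one dominate $|\vz_k\ast f|$ by a convolution that is controlled by a constant multiple of $M(\tz_j\ast f)$ summed with rapidly decaying weights, and then the weighted Fefferman--Stein vector-valued inequality closes the estimate. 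I expect the only genuine point requiring care — and hence the main obstacle — is the bookkeeping of the anisotropic almost-orthogonality: one must track how the smoothness and cancellation of $\vz$ and $\psi$ interact with the non-isotropic dilation structure (the exponents $\zeta_{\pm}=\log_b\lz_{\pm}$ governing $\|A^k\|$ versus $b^k$), exactly as in the kernel estimates carried out in Lemma \ref{l3.1} and Lemma \ref{l3.2A} above; once those decay estimates are in hand, the rest is the standard weighted Littlewood--Paley machinery. Since every ingredient (Calder\'on formula, weighted anisotropic Calder\'on--Zygmund bounds, weighted vector-valued maximal inequality, and the Lusin-area boundedness itself) is already available from \cite{bh} and \cite{blyz1}, the proof amounts to repeating \cite[Theorem 3.2]{blyz1} verbatim with $\vec S_\vz$ replaced by $g_\vz$, and I would simply remark that the details are ``similar'' rather than reproduce them.
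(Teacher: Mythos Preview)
Your proposal is correct and matches the paper's approach: the paper simply asserts that the proof is similar to that of the anisotropic Lusin-area function in \cite[Theorem 3.2]{blyz1} and omits the details, which is precisely what you outline and ultimately recommend doing.
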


Finally, the isotropic and unweighted versions
of the following lemma have appeared in several
product settings; see, for example, \cite[Theorem 4.3]{hl08}
and the proof of \cite[Proposition 4]{hlll}. In particular, Lemma \ref{l3.7} can
be deduced from the proof of
\cite[Theorem 5.2]{blyz1} as indicated below.

\begin{lem}\label{l3.7}
Suppose that $w\in\wfz$ and $p\in(0,\,1]$. If $f\in L^2(\rnm)\cap\hp,$ then
$f\in L^p_w(\rnm)$. Moreover, there exists a positive constant $C_p$,
independent of $f$, such that $\|f\|_{L^p_w(\rnm)}\le C_p
\|f\|_\hp$.
\end{lem}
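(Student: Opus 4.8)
\textbf{Proof proposal for Lemma \ref{l3.7}.}

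The plan is to exploit the Lusin-area characterizations available to us on both scales. The key observation is that for $f\in L^2(\rnm)$, the unweighted area function $\vec S_\psi(f)$ controls $\|f\|_{L^2(\rnm)}$ up to constants (this is the unweighted case of Proposition \ref{p2.2}, or follows directly from Plancherel and Proposition \ref{p2.1}), while by the very definition of $\hp$ (Definition \ref{d2.6}) we have $\|\vec S_\psi(f)\|_\lp = \|f\|_\hp < \fz$. So the task reduces to the \emph{purely real-variable} statement: if $g\equiv \vec S_\psi(f)$ satisfies $g\in L^2(\rnm)$ and $g\in \lp$ with $w\in\wfz$, $p\in(0,1]$, then $f\in L^p_w(\rnm)$ with $\|f\|_{L^p_w(\rnm)}\ls \|g\|_\lp = \|f\|_\hp$. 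First I would record that $f\in L^2(\rnm)\cap\hp$ forces $f$ to vanish weakly at infinity, so the area-function machinery of \cite{blyz1} applies without qualification.

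Next I would follow the argument from the proof of \cite[Theorem 5.2]{blyz1}, which is precisely the passage from an area-function bound to a weighted-Lebesgue bound on product spaces. The mechanism is the standard good-$\lambda$ / Calder\'on--Zygmund decomposition on the level sets of the strong maximal function $\cm_s$: one decomposes $\rnm$ according to the open sets $\Omega_j\equiv\{x:\ \cm_s(\mathbf 1_{\{\vec S_\psi(f)>2^j\}})(x)>1/2\}$ (or directly the level sets of $\vec S_\psi(f)$ together with $\cm_s$), uses the $A_\infty$ property of $w$ to compare $w(\Omega_j)$ with the Lebesgue measure of the enlarged dyadic rectangles covering $\{\vec S_\psi(f)>2^j\}$, and invokes the weighted boundedness of the strong maximal operator (\cite[Proposition 2.2]{blyz1}). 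Summing the resulting geometric series in $j$ and using $p\le 1$ so that the $\ell^p$ quasi-norm dominates the $\ell^1$ sum of the pieces, one arrives at $\|f\|_{L^p_w(\rnm)}^p\ls \sum_j 2^{jp} w(\{\vec S_\psi(f)>2^j\})\sim \|\vec S_\psi(f)\|_{L^p_w(\rnm)}^p$. Because all constants depend only on $p$, $w$ (through $q_w$ and $C_{\vec A}(w)$) and the fixed $\psi$, the constant $C_p$ is independent of $f$.

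The only point requiring genuine care — and the main obstacle — is ensuring that the pointwise inequality $|f(x)|\ls$ (something controlled by $\vec S_\psi(f)$ and $\cm_s$) holds in a usable \emph{a.e.}\ or distributional sense on product domains, where there is no single maximal function dominating $f$ by its area integral as in the one-parameter theory; this is exactly why the $L^2$ hypothesis is needed, since it guarantees the relevant Calder\'on reproducing formula $f=\sum_{k_1,k_2}\phi_{k_1,k_2}\ast\phi_{k_1,k_2}\ast f$ converges in $L^2$ and hence a.e.\ along a subsequence, letting us bound $|f(x)|$ by a square-function-type expression that $\cm_s$ then transfers to the enlarged rectangles. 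I would handle this by citing the relevant steps of \cite[Theorem 5.2]{blyz1} verbatim rather than reproving the reproducing formula, since the lemma is stated only as an auxiliary tool. Everything else — the $A_\infty$ comparison of measures on dyadic rectangles, the weighted strong maximal inequality, the geometric summation — is routine and already encapsulated in the cited results.
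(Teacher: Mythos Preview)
Your endpoint estimate $\|f\|_{L^p_w}^p \ls \sum_j 2^{jp}w(\Omega_j)\sim\|\vec S_\psi(f)\|_{L^p_w}^p$ with $\Omega_j=\{\vec S_\psi(f)>2^j\}$ is exactly what the paper obtains, and you are right both that the $L^2$ hypothesis is spent on making a decomposition of $f$ converge in $L^2$ (hence a.e.) and that \cite[Theorem~5.2]{blyz1} is the correct reference.

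However, your description of the mechanism is off, and this is a genuine gap rather than a matter of taste. There is no workable pointwise inequality of the form ``$|f(x)|\ls$ (something built from $\vec S_\psi(f)$ and $\cm_s$)'' on product domains --- you acknowledge the obstruction yourself --- and the proof of \cite[Theorem~5.2]{blyz1} does not supply one, nor does it proceed by a good-$\lambda$ comparison of $f$ against its area function. What that theorem actually furnishes is the \emph{atomic decomposition}: one writes $f=\sum_{k\in\zz}\lambda_k a_k$ with $\lambda_k=2^k[w(\Omega_k)]^{1/p}$, where each $a_k$ is supported in an enlargement $\widetilde\Omega_k'''$ of $\Omega_k$ satisfying $w(\widetilde\Omega_k''')\ls w(\Omega_k)$ together with an $L^q_w$ size bound. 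The $L^2$ membership of $f$ is what forces this sum to converge in $L^2(\rnm)$ and hence a.e. From there the paper uses $|f|^p\le\sum_k\lambda_k^p|a_k|^p$ (valid since $p\le1$), H\"older with exponent $q/p$ on each term, and the support and size of $a_k$ to land on $\sum_k 2^{kp}w(\Omega_k)$.

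So the replacement for one-parameter pointwise domination is not a pointwise inequality at all --- it is the decomposition of $f$ into localized, size-controlled pieces. Once you recognize that \cite[Theorem~5.2]{blyz1} is the atomic decomposition theorem rather than a good-$\lambda$ estimate, your sketch collapses into the paper's proof; as written, though, the step ``bound $|f(x)|$ by a square-function-type expression that $\cm_s$ then transfers to the enlarged rectangles'' does not correspond to anything that can actually be carried out.
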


\begin{proof}
Let $w\in\wfz$, $p\in(0,\,1]$ and $f\in H^p_w(\rnm)\cap L^2(\rnm)$.
By an argument similar to the proof of \cite[Theorem 4.3]{hl08} or
\cite[Proposition 4]{hlll},
we shall prove that the atomic decomposition of $f$ converges
in $L^2(\rnm)$ and thus pointwise almost everywhere.

Indeed, let $\psi\equiv\psi^{(1)} \otimes \psi^{(2)}$ be
as in Proposition \ref{p2.1}. For any $k\in\zz$, let
$\Oz_k\equiv\{x\in\rnm:\, \vec S_\psi(f)(x)>2^k\},$
$\lz_k\equiv 2^k[w(\Oz_k)]^{1/p}$, and
$$a_k\equiv
\lz_k^{-1}\sum_{P\in m(\wz\Oz_k)} \sum_{R\in\hr_k,
\,R^\ast=P}e_R.$$
Here, our notation is the same
as in \cite[Lemma 4.6]{blyz1}. Since $f\in L^2(\rnm)$,
by Lemma 2.15 and (4.8) of \cite{blyz1}, we have that
$f=\sum_{k\in\zz}\lz_k a_k$ holds in $L^2(\rnm)$,
and hence also almost everywhere. From this,
$\supp a_k\subset\wz\Oz_k'''$ with $w(\wz\Oz_k''')
\ls w(\Oz_k)$ (see \cite[(6.5)]{blyz1}),
$q\in[2,\,\fz)\cap(q_w,\,\fz)$, H\"older's inequality,
and the size condition of $a_k$, it follows that
\begin{eqnarray*} &\|f\|_{L^p_w(\rnm)}^p&\ls
\sum_{k\in\zz}\lz_k^p\int_{\wz\Oz_k'''} |a_k(x)|^p\, w(x)dx
\ls\sum_{k\in\zz}\lz_k^p\|a_k\|_{L^q_w(\rnm)}^p [w(\wz\Oz_k''')]^{1-p/q}\\
&&\ls\sum_{k\in\zz}2^{kp}w(\Oz_k)\ls \|\vec S_\psi(f)\|_{\lp}^p\sim\|f\|_\hp,
\end{eqnarray*}
which completes the proof of Lemma \ref{l3.7}.\end{proof}

\begin{proof}[Proof of Theorem \ref{t1.2}]
Let $T$ be a PASIO of order
$(s_1+1,s_2+1)$ with kernel $K$ as in Definition \ref{d1.3}. By the assumption
\eqref{e1.1} which says that
 $s_i>(q_w/p-1)\log_{|\lz_{i,\,1}|}b_i$ for $i=1,\,2$, we can choose
$1<\lz_{i,\,-}<|\lz_{i,\,1}|$ close to $|\lz_{i,\,1}|$,
$r\in(q_w,\,\fz)$ close to $q_w$ such that
\begin{eqnarray}\label{e3.9}
\eta_i\equiv p[s_i\zeta_{i,\,-}+1]-r>0,\ \ i=1,\,2.
\end{eqnarray}
Let $q>\max\{2,\,r\}$. Then, $(p,\,q,\vec s)_w$ is an admissible
triplet, where $\vec s\equiv(s_1-1,\,s_2-1)$.

Let
$\psi\equiv\psi^{(1)} \otimes \psi^{(2)}$ and $\phi\equiv\phi^{(1)}
\otimes \phi^{(2)}$ be
as in Proposition \ref{p2.1}. By part (iv) of this proposition we have
$\psi=\phi\ast\phi$. Hence, by Theorem \ref{t1.1} and Definition
\ref{d2.6}, $T(f)$ is well defined for any $f\in
L^2_w(\rnm)\cap H^p_w(\rnm)$ and
\begin{eqnarray*}
&\|Tf\|_{H^p_w(\rnm)}&=\|\vec
S_{\phi\ast\phi}(Tf)\|_{L^p_w(\rnm)}\\
&&=\| \{ \phi_{k_1,\,k_2}\ast\phi_{k_1,\,k_2}\ast [T(f)]
\}_{k_1,\,k_2\in\zz}\|_{L^p_{w,\ch}(\rnm)}.
\end{eqnarray*}

To obtain the boundedness of $T$ on $H^p_w(\rnm)$, by Lemma
\ref{l3.4} and the density of $L^2_w(\rnm)\cap H^p_w(\rnm)$ in
$H^p_w(\rnm)$ given by \cite[Theorem 5.1(i)]{blyz1}, it suffices to
prove that for all rectangular $(p,\,q,\,\vec s)_w$-atoms $a$
associated to certain $R\in \hr$ and all $\gz\in\zz_+$,
\begin{eqnarray}\label{e3.10}
&&\quad\int_{(R_{1,\,\gz}\times R_{2,\,\gz})^{\complement}}
\big|\big\{\phi_{k_1,\,k_2}\ast\phi_{k_1,\,k_2}\ast[T(a)](x)
\big\}_{k_1,\,k_2\in\zz}\big|_\ch^p w(x)\, dx\ls
\max\{b_1^{-\eta_1\gz},\, b_2^{-\eta_2\gz}\},
\end{eqnarray}
where $\eta_i$ is as in \eqref{e3.9} and $R_{i,\gz}\equiv
x_{R_i}+B^{(i)}_{v_i(\ell(R_i)-1)+u_i+5\sz_i+\gz}$ for $i=1,\,2$.

The left hand side of \eqref{e3.10} is less than
\begin{eqnarray}\label{e3.11}
&&\Bigg\{\int_{R_{1,\,\gz}^{\complement}\times
R_{2,\,0}^{\complement}}+\int_{R_{1,\,\gz}^{\complement}\times
R_{2,\,0}} +\int_{R_{1,\,0}\times R_{2,\,\gz}^{\complement}}
+\int_{R_{1,\,0}^{\complement}\times R_{2,\,\gz}^{\complement}}
\Bigg\}\\
&&\hs\times
\bigg|\bigg\{\phi_{k_1,\,k_2}\ast\phi_{k_1,\,k_2}\ast[T(a)](x)
\bigg\}_{k_1,\,k_2\in\zz}\bigg|_\ch^p w(x)\,dx\equiv {\mathrm
I}_1+{\mathrm I}_2+{\mathrm I}_3+{\mathrm I}_4.\nonumber
\end{eqnarray}
We only estimate $\mathrm I_2$, since the estimates for the other
three items are similar.

Let $N_1,\,N_2$ be as in Definition \ref{d1.3}. Since
$\phi^{(i)}\in\cs_0(\rr^{n_i})$ for $i=1,\,2$, then by Lemma
\ref{l3.1}, we obtain that $\phi^{(i)}=\sum_{j_i=0}^\fz
b_i^{-3j_i}\phi^{(i,\,j_i)},$ where
$\phi^{(i,\,j_i)}\in\cs_0(\rr^{n_i})$ is a constant multiple of an
$(s_i+N_i+1)$-normalized bump function associated to
$B^{(i)}_{j_i}$. For $j_1,\, j_2\in\zz_+$, let
$\phi^{\{j_1,\,j_2\}}\equiv\phi^{(1,\,j_1)}\otimes \phi^{(2,\,j_2)}$. Thus,
by $\phi=\phi^{(1)}\otimes \phi^{(2)}$, we have
\begin{equation}\label{e3.12}
\phi\ast\phi=\sum_{j_1,\,j_2,\,\ell_1\in\zz_+} b_1^{-3(j_1+\ell_1)}
b_2^{-3j_2}\phi^{\{j_1,\,j_2\}}\ast(\phi^{(1,\,\ell_1)}\otimes \phi^{(2)}).
\end{equation}
Moreover, by Theorem \ref{t1.1} and a density argument,
we obtain
\begin{equation}\label{e3.13}
\phi^{\{j_1,\,j_2\}}\ast(\phi^{(1,\,\ell_1)}\otimes \phi^{(2)})\ast[T(a)]=
K\ast[(\phi^{(1,\,j_1)}\ast_1
\phi^{(1,\,\ell_1)})\otimes\phi^{(2,\,j_2)}]\ast(a\ast_2\phi^{(2)}),
\end{equation}
where $\ast_i$ denotes the convolution on $\rr^{n_i}$, $i=1,\,2$. In
fact, if $a\in\cd(\rnm)$, then the above equality holds. For the
rectangular $(p,\,q,\,\vec s)$-atom $a$, let
$\{a_k\}_{k\in\nn}\subset\cd(\rnm)$ be a sequence of functions
approximating to $a$ in $L^q_w(\rnm)$. Noticing that $T(a_k) \to Ta$
in $L^q_w(\rnm)$, we have \eqref{e3.13}.

For $k_1,\,k_2\in\zz$ and $j_1,\,j_2,\,\ell_1\in\zz_+$, let
$K_{k_1,\,k_2}^{j_1,\,j_2,\,\ell_1}\equiv
K\ast[(\phi^{(1,\,j_1)}\ast_1
\phi^{(1,\,\ell_1)})_{k_1}\otimes\phi_{k_2}^{(2,\,j_2)}]$. By $w\in
\ca_r(\rnm;\,\vec A)$, \cite[Proposition 2.2(i)]{blyz1} and Lemma \ref{l3.3}(iv),
we have $w(R_{1,\,t_1+\gz+1}\times R_{2,\,0})\ls b_1^{r(\gz+t_1)}
w(R).$ From this, \eqref{e3.12}, \eqref{e3.13}, Minkowski's
inequality and H\"older's inequality, it follows that
\begin{eqnarray}\label{e3.14}
\hs\hs\quad {\mathrm I}_2&&\ls
\sum_{j_1,\,j_2,\,\ell_1\in\zz_+}b_1^{-3p(j_1+\ell_1)}b_2^{-3p\ell_2}
\sum_{t_1\in\zz_+}\bigg\{\int_{\lf(R_{1,\,\gz+t_1+1}\setminus
R_{1,\,\gz+t_1}\r)\times R_{2,\,0}}\\
&&\hs\times\bigg|\bigg\{
K_{k_1,\,k_2}^{j_1,\,j_2,\,\ell_1}\ast\lf(a\ast_2
\phi^{(2)}_{k_2}\r)(x)\bigg\}_{k_1,\,k_2\in\zz}\bigg|_\ch^r
w(x)\,dx\bigg\}^{p/r} b_1^{(r-p)(\gz+t_1)}[w(R)]^{1-p/r}\nonumber.
\end{eqnarray}

Let $\wz\ell_1\equiv v_1[\ell(R_1)-1]+u_1+\gz+t_1+5\sz_1$.
Write
\begin{eqnarray*}
&&\bigg|\bigg\{ K_{k_1,\,k_2}^{j_1,\,j_2,\,\ell_1}\ast\lf(a\ast_2
\phi^{(2)}_{k_2}\r)(x)\bigg\}_{k_1,\,k_2\in\zz}\bigg|_\ch^2\\
&&\quad=
\Bigg[\sum_{\gfz{k_1<\wz\ell_1-j_1-\ell_1-4\sz_1}{k_2\in\zz}}
+\sum_{\gfz{k_1\ge\wz\ell_1-j_1-\ell_1-4\sz_1}{k_2\in\zz}}\Bigg]\\
&&\quad\quad\times b_1^{-k_1}b_2^{-k_2}
\int_{B^{(1)}_{k_1}}\int_{B^{(2)}_{k_2}}|K_{k_1,\,k_2}^{j_1,\,j_2,
\,\ell_1}\ast(a\ast_2
\phi^{(2)}_{k_2})(x-y)|^2\,dy\equiv [{\mathrm V}_1(x)]^2+[{\mathrm
V}_2(x)]^2.
\end{eqnarray*}
We only estimate ${\mathrm V}_1$, since the estimate for ${\mathrm
V}_2$ is similar.

For $x\in (R_{1,\,\gz+t_1+1}\setminus R_{1,\,\gz+t_1})\times
R_{2,\,0},\, y\in B^{(1)}_{k_1}\times B^{(2)}_{k_2}$ and $z\in\rnm$,
let $$\wz K_{k_1,\,k_2}^{j_1,\,j_2,\,\ell_1}(z_1,\,z_2)\equiv
K_{k_1,\,k_2}^{j_1,\,j_2,\,\ell_1}(x_1-y_1-A_1^{\wz\ell_1}z_1,\,z_2).$$
For any $\wz y,\,\breve y\in\rnm$, by Taylor's formula with integral
remainder, we have
\begin{eqnarray*}
\wz K_{k_1,\,k_2}^{j_1,\,j_2,\,\ell_1}(\wz y_1,\,\wz y_2)
&&=\sum_{j_1=0}^{s_1-1}\sum_{|\az_1|=j_1}(\wz y_1-\breve
y_1)^{\az_1}\partial^{\az_1}_1 \wz
K_{k_1,\,k_2}^{j_1,\,j_2,\,\ell_1}(\breve y_1,\,\wz y_2)
+\sum_{|\az_1|=s_1 }\int_0^1 (\wz y_1-\breve y_1)^{\az_1}
\nonumber\\
&&\hs\times\partial^{\az_1}_1 \wz
K_{k_1,\,k_2}^{j_1,\,j_2,\,\ell_1}(\breve y_1+r_1(\wz y_1-\breve
y_1),\,\wz y_2) \frac{(1-r_1)^{s_1-1}}{s_1!}\, dr_1. \nonumber
\end{eqnarray*}
Let $\breve y_1\equiv A_1^{-\wz\ell_1}x_{R_1}$ and $\wz
y_1\equiv A_1^{-\wz\ell_1}z_1$.  By $\supp a\subset R''$ and the vanishing
condition of $a$ up to order $s_1-1$, we then have
\begin{eqnarray}\label{e3.15}
&&K_{k_1,\,k_2}^{j_1,\,j_2,\,\ell_1}\ast(a\ast_2 \phi^{(2)}_{k_2})(x-y)\\
&&\hs=\int_\rnm \wz
K_{k_1,\,k_2}^{j_1,\,j_2,\,\ell_1}(A_1^{-\wz\ell_1}z_1,\,x_2-y_2-z_2)(a\ast_2
\phi^{(2)}_{k_2})(z)\,dz\nonumber\\
&&\hs=\sum_{|\az_1|=s_1}\int_0^1\int_{R_1''\times \rrm}(a\ast_2
\phi^{(2)}_{k_2})(z)
(A_1^{-\wz\ell_1}(z_1-x_{R_1}))^{\az_1}\frac{(1+r_1)^{s_1-1}}{s_1!}
\nonumber\\
&&\hs\hs\times
\partial^{\az_1}_1\wz
K_{k_1,\,k_2}^{j_1,\,j_2,\,\ell_1}(A_1^{-\wz\ell_1}[x_{R_1}+(1-r_1)(x_{R_1}-z_1)],
\,x_2-y_2-z_2)\, dz\, dr_1\nonumber.
\end{eqnarray}

Moreover, for $x_1\in R_{1,\,\gz+t_1+1}\setminus R_{1,\,\gz+t_1}$,
$z_1\in R_1''$, $r_1\in(0,\,1)$ and
$\wz\ell_1>k_1+j_1+\ell_1+4\sz_1$, by \eqref{e2.2}, \eqref{e2.3} and
Lemma \ref{l3.3}(iv), we have $\rho_1(x_1-x_{R_1})=b_1^{\wz\ell_1}$
and $\rho_1(z_1-x_{R_1})\le
b_1^{-2\sz_1-t_1-\gz-1}\rho_1(x_1-x_{R_1}),$
which together with $k_1<\wz\ell_1-j_1-\ell_1-4\sz_1$
further means that
$$\rho_1(x_1-x_{R_1}-(1-r_1)(x_{R_1}-z_1))\sim b_1^{\wz\ell_1},$$
and that for $y_1\in B^{(1)}_{k_1}$,
$$\rho_1(x_1-x_{R_1}-(1-r_1)(x_{R_1}-z_1)-y_1)\le  b_1^{k_1}.$$
From this   and Lemma
\ref{l3.2A}, it follows that
\begin{eqnarray*}
&&\big|\partial^{\az_1}_1\wz
K_{k_1,\,k_2}^{j_1,\,j_2,\,\ell_1}(A_1^{-\wz\ell_1}[x_{R_1}+(1-r_1)(x_{R_1}-z_1)],
\,x_2-y_2-z_2)\big|\\
&&\hs=\big|
\partial^{\az_1}_1[
K_{k_1,\,k_2}^{j_1,\,j_2,\,\ell_1}(A_1^{\wz\ell_1}\cdot,\,
x_2-y_2-z_2)](A_1^{-\wz\ell_1}
[x_1-x_{R_1}-(1-r_1)(x_{R_1}-z_1)-y_1])\big|\\
&&\hs\ls b_1^{
(j_1+\ell_1)(1+\ez_1)+k_1\ez_1-\wz\ell_1(1+\ez_1)}
\frac{b_2^{k_2\ez_2}}{\big[b_2^{k_2}
+b_2^{-j_2}\rho_2(x_2-z_2)\big]^{1+\ez_2}},
\end{eqnarray*}
where we used the fact that
$b_2^{k_2}+b_2^{-j_2}\rho_2(x_2-y_2-z_2)\sim
b_2^{k_2}+b_2^{-j_2}\rho_2(x_2-z_2)$.

Furthermore, for $z_1\in R_1''$, by \cite[(2.6)]{blyz1}, we have
$|A_1^{-\wz\ell_1}(z_1-x_{R_1})|\ls b_1^{-(\gz+t_1)\zeta_{1,\,-}}$.
Thus, for $x\in (R_{1,\,\gz+t_1+1}\setminus R_{1,\,\gz+t_1})\times
R_{2,\,0}$, by the above two estimates,
\eqref{e3.15}, $\wz\ell_1=v_1[\ell(R_1)-1]+u_1+\gz+t_1+5\sz_1$, a
similar proof to that of \eqref{e3.8}, and Minkowski's inequality,
we obtain
\begin{eqnarray*}
&{\mathrm V}_1(x)&\ls
\Bigg\{b_1^{2(j_1+\ell_1)(1+\ez_1)-
2\wz\ell_1(1+\ez_1)}\sum_{\gfz{k_1<\wz\ell_1-j_1-\ell_1-4\sz_1}
{k_2\in\zz}}b_1^{2k_1\ez_1}b_2^{j_2(1+\ez_2)} \\
&&\hs\times\bigg[\int_{R_1''}
b_1^{-(\gz+t_1)s_1\zeta_{1,\,-}}\cm^{(2)}(a(z_1,\,\cdot)
\ast_2\phi^{(2)}_{k_2})(x_2)\, dz_1\bigg]^2\Bigg\}^{1/2}\\
&&\ls b_1^{(j_1+\ell_1)
-(t_1+\gz)s_1\zeta_{1,\,-}}b_2^{j_2(1+\ez_2)}\\
&&\hs\times\frac1{b_2^{[v_1\ell(R_1)+t_1+\gz]}}
\int_{R_{1,\,\gz+t_1+1}}\bigg\{\sum_{k_2\in\zz}|\cm^{(2)}(a(z_1,\,\cdot)\ast_2
\phi^{(2)}_{k_2})(x_2)|^2\bigg\}^{1/2}\,dz_1\\
&&\ls b_1^{j_1+\ell_1 -(t_1+\gz) s_1\zeta_{1,\,-}}
b_2^{j_2(1+\ez_2)}\\
&&\hs\times\cm^{(1)}\lf(\bigg\{\sum_{k_2\in\zz}\Big[\cm^{(2)}
\lf(a\ast_2\phi^{(2)}_{k_2}\r)(x_2) \Big]^2\bigg\}^{1/2}\r)(x_1),
\end{eqnarray*}
where and in what follows, $\cm^{(i)}$ denotes the Hardy-Littlewood
maximal function on $\rr^{n_i}$, $i=1,\,2$.

Then, by the above estimate of ${\mathrm V}_1(x)$, the
$L^r_{w(\cdot,\,x_2)}(\rn)$-boundedness of $\cm^{(1)}$ for all
$x_2\in\rrm$, the weighted vector-valued inequality for the
Hardy-Littlewood maximal operator $\cm^{(2)}$ with
$w(x_1,\,\cdot)\in\ca_r(\rrm;\, A_2)$ for all $x_1\in\rn$ (see
\cite[Theorem 2.5]{bh}), Lemma \ref{l3.5} with $g_{\phi^{(2)}}$,
$\supp a\subset R''$, $r>q>1$, H\"older's inequality, and the size
condition of $a$, we have
\begin{eqnarray*}
&&\bigg\{\int_{\lf(R_{1,\,\gz+t_1+1}\setminus
R_{1,\,\gz+t_1}\r)\times R_{2,\,0}}[{\mathrm V}_1(x)]^r\,
w(x)\,dx\bigg\}^{1/r}\\
&&\hs\ls b_1^{j_1+\ell_1 -
(t_1+\gz)s_1 \zeta_{1,\,-}}b_2^{j_2(1+\ez_2)}
\lf\|g_{\phi^{(2)}}(a) \r\|_{L^r_w(\rnm)}\\
&&\hs\ls b_1^{j_1+\ell_1 - (t_1+\gz) s_1\zeta_{1,\,-}}
b_2^{j_2(1+\ez_2)}
\|a\|_{L^q_w(\rnm)}[w(R'')]^{1/r-1/q}\\
&&\hs\ls b_1^{j_1+\ell_1  - (t_1+\gz)s_1 \zeta_{1,\,-}}
b_2^{j_2(1+\ez_2)} [w(R)]^{1/r-1/p}.
\end{eqnarray*}

From this and $\eta_1=p[(s_1+1)\zeta_{1,\,-}+1]-r>0$, it
follows that
\begin{eqnarray*}
&&\sum_{t_1\in\zz_+}\bigg\{\int_{\lf(R_{1,\,\gz+t_1+1}\setminus
R_{1,\,\gz+t_1}\r)\times R_{2,\,0}}[{\mathrm V}_1(x)]^r
w(x)\,dx\bigg\}^{p/r}
b_1^{(r-p)(\gz+t_1)}[w(R)]^{1-p/r}\\
&&\hs\ls\sum_{t_1\in\zz_+}b_1^{p(j_1+\ell_1)
}b_2^{pj_2(1+\ez_2)}
[w(R)]^{p/r-1}b_1^{-p  (t_1+\gz)s_1\zeta_{1,\,-}}b_1^{(t_1+\gz)(r-p)}
[w(R)]^{1-p/r}\\
&&\hs\ls b_1^{p(j_1+\ell_1) -\gz\eta_1} b_2^{pj_2(1+\ez_2)},
\end{eqnarray*}
which together with \eqref{e3.14}  yields that
${\mathrm I}_2\ls b_1^{-\gz\eta_1}$.

By an estimate similar to that of ${\mathrm I}_2$, we also have
${\mathrm I}_1+{\mathrm I}_3+{\mathrm
I}_4\ls\max\{b_1^{-\gz\eta_1},\,b_2^{-\gz\eta_2}\}$, where $\eta_1$
and $\eta_2$ are as \eqref{e3.9}. Thus, by this and \eqref{e3.11},
we obtain \eqref{e3.10} and hence the boundedness of $T$ on
$H^p_w(\rnm;\,\vec A)$.

Finally, let us prove that $T$ is bounded from $\hp$ to
$L_w^p(\rnm)$ with $p\in(0,\,1]$ satisfying \eqref{e1.1}
by borrowing some ideas from the proof of \cite[Theorem 1.11]{hl08}.
Assume that $f\in L^2(\rnm)\cap\hp$. By Theorem \ref{t1.1},
Lemma \ref{l3.7} and the boundedness of $T$ on $\hp$,  we obtain
that
$$Tf\in L^p_w(\rnm)\cap\hp\cap L^2(\rnm)$$
and $\|Tf\|_\lp\ls \|Tf\|_\hp\ls \|f\|_\hp.$ This together with the
density of $L^2(\rnm)\cap\hp$ in $\hp$ given by \cite[Theorem 5.1
(i)]{blyz1} implies that $T$ extends to a linear bounded operator
from $H^p_w(\rnm )$ to $L^p_w(\rnm)$.
This finishes the proof of
Theorem \ref{t1.2}.
\end{proof}

\bigskip

\noindent Baode Li

\medskip

\noindent School of Mathematics and System
Sciences, Xinjiang University, Urumqi {\rm 830046},
China

\smallskip

\noindent {\it E-mail address}: \texttt{baodeli1981@sina.com}

\bigskip

\noindent Marcin Bownik

\medskip

\noindent Department of Mathematics, University of Oregon, Eugene,
OR 97403-1222, USA

\smallskip

\noindent {\it E-mail address}: \texttt{mbownik@uoregon.edu}

\bigskip

\noindent Dachun Yang (Corresponding author) and Yuan Zhou

\medskip

\noindent School of Mathematical Sciences, Beijing Normal
University, Laboratory of Mathematics and Complex Systems, Ministry
of Education, Beijing 100875, People's Republic of China

\smallskip

\noindent{\it E-mail addresses}: \texttt{dcyang@bnu.edu.cn, yuanzhou@mail.bnu.edu.cn}

\end{document}